\title{Residually finite quantum group algebras}
\author{Alexandru Chirvasitu}
\thanks{Email address: \MAILTO{chirva@uw.edu}}
\keywords{compact quantum group; discrete quantum group; CQG algebra; residually finite-dimensional; maximally almost periodic}
\subjclass[2010]{16T05; 16T20; 46L52}
\newtheorem*{ntheorem}{Theorem}
\newtheorem{lemma}{Lemma}[section]
\newtheorem{theorem}[lemma]{Theorem}
\newtheorem{proposition}[lemma]{Proposition}
\newtheorem{corollary}[lemma]{Corollary}
\theoremstyle{definition} 
\newtheorem{definition}[lemma]{Definition}
\newtheorem{examplenodiamond}[lemma]{Example}
\newtheorem{remarknodiamond}[lemma]{Remark}
\newenvironment{example}{\begin{examplenodiamond}}{\hfill\ensuremath\blacklozenge\end{examplenodiamond}}
\newenvironment{remark}{\begin{remarknodiamond}}{\hfill\ensuremath\blacklozenge\end{remarknodiamond}}
\renewcommand\qedhere{\qed}
\newcounter{stepofproof}
\crefname{section}{Section}{Section}
\crefname{subsection}{}{Subsections}
\crefname{definition}{Definition}{Definitions}
\crefname{example}{Example}{Examples}
\crefname{examplenodiamond}{Example}{Examples}
\crefname{remark}{Remark}{Remarks}
\crefname{remarknodiamond}{Remark}{Remarks}
\crefname{convention}{Convention}{Conventions}
\crefname{lemma}{Lemma}{Lemmas}
\crefname{proposition}{Proposition}{Propositions}
\crefname{corollary}{Corollary}{Corollaries}
\crefname{theorem}{Theorem}{Theorems}
\crefname{assumption}{Assumption}{Assumptions}
\crefname{equation}{}{}
\crefname{proofstep}{Step}{Steps}
\tikzset{
    state/.style={
           rectangle,
           rounded corners,
           draw=black, 
           minimum height=2em,
           inner sep=4pt,
           text centered,
           },
}
\newcommand\cat[1]{\textsc{#1}}
\newcommand\define[1]{\emph{#1}}
\newcommand\arXiv[1]{\href{http://arxiv.org/abs/#1}{\nolinkurl{arXiv:#1}}}
\newcommand\MRnumber[1]{\href{http://www.ams.org/mathscinet-getitem?mr=#1}{\nolinkurl{MR#1}}}
\newcommand\DOI[1]{\href{http://dx.doi.org/#1}{\nolinkurl{DOI:#1}}}
\newcommand\MAILTO[1]{\href{mailto:#1}{\nolinkurl{#1}}}
\newcommand\bC{\mathbb C}
\newcommand\bP{\mathbb P}
\newcommand\bR{\mathbb R}
\newcommand\bZ{\mathbb Z}
\newcommand\cC{\mathcal C}
\newcommand\cM{\mathcal M}
\renewcommand\lim{\varprojlim}
\newcommand\id{\mathrm{id}}
\newcommand\ol{\overline}
\begin{document}

\maketitle

\begin{abstract}
We show that provided $n\ne 3$, the involutive Hopf $*$-algebra $A_u(n)$ coacting universally on an $n$-dimensional Hilbert space has enough finite-dimensional representations in the sense that every non-zero element acts non-trivially in some finite-dimensional $*$-representation. This implies that the discrete quantum group with group algebra $A_u(n)$ is maximal almost periodic (i.e. it embeds in its quantum Bohr compactification), answering a question posed by P. So\l tan in \cite{MR2210362}. 

We also prove analogous results for the involutive Hopf $*$-algebra $B_u(n)$ coacting universally on an $n$-dimensional Hilbert space equipped with a non-degenerate bilinear form. 
\end{abstract}

\section{Introduction}\label{se.intro}

This paper is concerned with the property of \define{residual finite-dimensionality} (or RFD for short) for operator algebras associated to discrete quantum groups. For a $C^*$-algebra $A$ being RFD means having a separating family representations on finite-dimensional Hilbert spaces; in other words, for any $0\ne a\in A$ there is a $C^*$-algebra homomorphism $\pi:A\to M_n(\bC)$ that does not annihilate $a$. 

The RFD property is in a sense the exact opposite of simplicity: A $C^*$-algebra is simple if it has no non-obvious quotients, whereas it is RFD if it has plenty of small quotients. The free group $F_2$ on two generators is the perfect example for both extremes: It is a standard result that its so-called \define{reduced} $C^*$-algebra (i.e. the closure in $B(\ell^2(F_2))$ of the left regular representation) is simple, while the $C^*$-algebra universally generated by two unitaries (the \define{full} $C^*$-algebra of $F_2$) is RFD \cite{MR590864}. 

In fact freeness of some sort has been central in investigating the RFD property. In \cite{MR1052340} for instance, the authors prove among other things that the full $C^*$-algebra of a monoid that is a coproduct (in the category of monoids) of free groups and free or finite monoids is RFD; and in \cite{MR1168356} Exel and Loring show that the coproduct of two RFD $C^*$-algebras is again RFD, generalizing all of the previously-cited results (the full $C^*$-algebra $C^*(F_2)$ of $F_2$ for instance is the coproduct of two copies of the RFD full $C^*$-algebra $C^*(\bZ)$). 

All of the above references deal extensively with group $C^*$-algebras of discrete groups. The same kinds of issues are raised in \cite{MR2210362} in the context of discrete \define{quantum} groups, where the main objects under consideration are the so-called CQG algebras of \cite{MR1310296}. 

We recall below (\Cref{subse.CQG}) that these are algebras (in fact Hopf algebras) which should be thought of as comprising the representative functions on a compact ``quantum group''. By a kind of non-commutative Pontryagin duality, such an algebra is also trying to be the group algebra of a discrete quantum group. Just as for a classical discrete group, a CQG algebra has two extremal completions to a $C^*$-algebra: a largest one called `full' and a smallest one called `reduced'. 

The paper \cite{MR2210362} defines and constructs Bohr compactifications for discrete quantum groups. The procedure is parallel to the classical one of compactifying an ordinary discrete group, and many of the problems one can pose classically make sense here too. In particular, there is a notion of \define{maximal almost periodicity} for a discrete quantum group, meaning morally that it embeds in its Bohr compactification (see the discussion at the very end of the paper for some more details). It is here that the RFD property becomes relevant: So\l tan shows in \cite[4.10 (1)]{MR2210362} that a discrete quantum group is indeed maximal almost periodic provided the full $C^*$ completion of the underlying CQG algebra is RFD.

In view of all of the above, the CQG algebras $A_u(Q)$ from \Cref{ex.free} are now natural candidates for testing RFD-ness, since they are in a sense ``universal''; as explained in \Cref{ex.free}, this means that the family of all $A_u(Q)$ is for compact quantum groups what function algebras of unitary groups are for ordinary compact groups. Moreover, as discrete quantum groups the $A_u(Q)$ can be regarded as analogues of the free groups (for instance because their representation rings are non-commutative polynomial rings; see \cite{MR1484551} and the discussion below, in \Cref{se.main}). 

It turns out that $A_u(Q)$ cannot possibly be RFD unless $Q$ is scalar, so that we may as well assume it is the identity matrix $I_n$ for some $n$; we denote $A_u(I_n)$ by $A_u(n)$. The question of whether or not the full $C^*$ completions of $A_u(n)$ are RFD is then posed explicitly in \cite{MR2210362}. 

Here we prove somewhat less than this, but still enough to get maximal almost periodicity. The main observation is that the latter property does not require that the full $C^*$ envelope of the CQG algebra be RFD; instead, it is enough that the CQG algebra have some RFD $C^*$ completion. Equivalently, this means that the CQG algebra itself is RFD in the obvious sense: It has a separating family of finite-dimensional $*$-representations (see \Cref{def.RFD}). It is this purely algebraic formulation of RFD-ness that is central to the paper and its main result (\Cref{th.main}):

\begin{ntheorem}
The CQG algebras $A_u(n)$ are RFD provided $n\ne 3$, as are the CQG algebras $B_u(n)=B_u(I_n)$ coacting universally on an $n$-dimensional Hilbert space endowed with a bilinear form from see \Cref{ex.ortho}. \qedhere
\end{ntheorem}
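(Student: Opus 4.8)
The plan is to establish the separation property of \Cref{def.RFD} directly, and to begin by reducing the unitary algebras to the orthogonal ones. Banica's identification of $A_u(n)$ as the free complexification of $B_u(n)$ provides an embedding of $*$-algebras $A_u(n)\hookrightarrow B_u(n)*\bC[\bZ]$, realizing the fundamental unitary of $A_u(n)$ as $zv$, where $v=(v_{ij})$ is the self-adjoint orthogonal fundamental matrix of $B_u(n)$ and $z$ is the unitary generating $\bC[\bZ]$. A $*$-subalgebra of an RFD algebra is obviously RFD, and the full free product of RFD $C^*$-algebras is RFD by the Exel--Loring theorem \cite{MR1168356}; passing to universal $C^*$-completions (which carry genuine norms precisely because the factors are RFD) therefore reduces the whole statement to the RFD-ness of $B_u(n)$ for $n\ne 3$, the factor $\bC[\bZ]=\bC[z,z^{-1}]$ being commutative and hence trivially RFD.

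The core problem is thus $B_u(n)$, generated by an orthogonal matrix $v=(v_{ij})$ with self-adjoint entries (see \Cref{ex.ortho}). My main device is a family of \emph{twisted diagonal} representations. Choose symmetries $s_1,\dots,s_n$---self-adjoint unitaries with $s_k^2=1$---generating a copy of $\bZ_2^{*n}$, and put $D=\mathrm{diag}(s_1,\dots,s_n)$. For each classical $O\in O(n)$ the assignment $v\mapsto ODO^{T}$ is a $*$-representation $\pi_O\colon B_u(n)\to M_n(\bC)\otimes\bC[\bZ_2^{*n}]$: the entries $\sum_k O_{ik}O_{jk}s_k$ are self-adjoint, and $ODO^{T}$ is orthogonal because $D^2=1$ and $O$ is real. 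Since $\bC[\bZ_2^{*n}]$ is RFD (again by \cite{MR1168356}, as a free product of copies of the finite-dimensional $\bC[\bZ_2]$), composing the $\pi_O$ with the separating finite-dimensional representations of $\bZ_2^{*n}$ yields an abundant supply of genuinely finite-dimensional $*$-representations of $B_u(n)$.

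What remains, and what I expect to be the main obstacle, is to prove that the family $\{\pi_O\}_{O\in O(n)}$ is jointly faithful, i.e.\ $\bigcap_O\ker\pi_O=0$. For this I would decompose $B_u(n)=\bigoplus_{m\ge 0}C(v_m)$ into the coefficient spaces of its irreducible corepresentations (which obey the Temperley--Lieb fusion $v_1\otimes v_m\cong v_{m-1}\oplus v_{m+1}$) and exploit the linear independence of reduced words $s_{k_1}\cdots s_{k_m}$ in $\bC[\bZ_2^{*n}]$. Extracting the coefficient of a fixed reduced word from $\pi_O(a)$ converts the condition $\pi_O(a)=0$ for all $O$ into the vanishing of explicit polynomial functions of the entries of $O$, evaluated along orthonormal systems of columns; faithfulness on the $v_m$-isotypic piece then amounts to these evaluations being rich enough to recover the corresponding coefficient tensor. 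I would run this as an induction on $m$, boosting from the generating piece $C(v_1)$ by means of the tensor-product representations $\pi_O^{\otimes m}$ assembled via the coproduct.

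The delicate point---and the reason for the hypothesis $n\ne 3$---is precisely that this genericity can fail for small $n$: three-dimensional orthogonal matrices satisfy extra polynomial identities (cross-product and cofactor relations reflecting the exceptional isogeny $SU(2)\to SO(3)$), which would collapse the classical evaluations and stop them from separating the higher $C(v_m)$. I would therefore set $n=3$ aside, as in the statement, to be excluded or handled by a separate ad hoc argument, while for all other $n$ the column systems range freely enough to force injectivity. Granting the faithfulness step, $B_u(n)$ is RFD for $n\ne 3$, and the reduction of the first paragraph then delivers the same conclusion for $A_u(n)$.
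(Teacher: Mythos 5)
Your reduction of $A_u(n)$ to $B_u(n)$ is sound, and is in fact one half of the paper's own \Cref{pr.A<->B}: the embedding $A_u(n)\hookrightarrow \bC[t,t^{-1}]*B_u(n)$ from \cite{MR2468039} combined with stability of RFD-ness under free products (\Cref{pr.RFD_free_prod}). (One caveat: run the free-product step at the level of $*$-algebras or of RFD completions, as in \Cref{pr.RFD_free_prod}; RFD-ness of the $*$-algebra $B_u(n)$ says nothing about its \emph{universal} $C^*$-completion, a distinction the paper is careful to maintain. Also note the paper's main line goes the other way: it proves $A_u(n)$ RFD first, by induction, and transfers the property to $B_u(n)$ via \Cref{th.lift_RFD}.) Your twisted diagonal representations are likewise well defined: $ODO^{T}$ is a symmetric unitary with self-adjoint entries, so $v_{ij}\mapsto (ODO^T)_{ij}$ gives a $*$-homomorphism $\pi_O\colon B_u(n)\to\bC[(\bZ/2\bZ)^{*n}]$, and the target is indeed RFD.

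The genuine gap is the joint faithfulness claim $\bigcap_{O\in O(n)}\ker\pi_O=0$, which you explicitly defer and never prove; it is not a technical remainder but the entire content of the theorem. It amounts to (a strengthening of) the assertion that the free orthogonal quantum group is topologically generated by the $O(n)$-conjugates of its diagonal quantum subgroup dual to $(\bZ/2\bZ)^{*n}$ --- a statement of at least the same depth as the paper's \Cref{le.2->4,le.4->}, where all the real work happens via Schur--Weyl duality, linear independence of non-crossing pairings (\Cref{rem.lin_ind}), and a degeneration argument inside a Grassmannian. Your sketched induction (extract coefficients of reduced words, turn $\pi_O(a)=0$ into polynomial conditions on columns of $O$, recover the coefficient tensor of each irreducible) names precisely the hard part without supplying any mechanism that makes it work. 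Moreover, your argument as written needs strictly more than topological generation: generation only says that no nonzero Hopf $*$-ideal lies inside $\bigcap_O\ker\pi_O$ (joint \emph{inner} faithfulness), whereas you need the literal intersection of kernels to vanish. The bridge between the two is exactly \Cref{pr.RFD_quot_CQG}, which makes the kernel of $B_u(n)\to\ol{B_u(n)}$ a Hopf $*$-ideal contained in that intersection --- but your proposal neither proves the generation statement nor invokes this reduction. Finally, your rationale for excluding $n=3$ (identities satisfied by $3\times 3$ orthogonal matrices) is speculation; in the paper the restriction has a concrete and entirely different source: the inductive step needs non-crossing pairings on an $(n-2)$-dimensional space to be linearly independent, forcing $n-2\ge 2$ and hence the jump from $n=2$ directly to $n=4$.
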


Recalling from \cite{MR1484551} that the reduced $C^*$ completion of $A_u(n)$ is simple, this shows that the ``group algebra'' $A_u(n)$ exhibits the same wide range of behaviors as the group algebra of an ordinary free group: Its small $C^*$ completions are simple, but there are larger ones that are RFD.

In addition to the motivation coming from maximal almost periodicity, there is a second strand of ideas that is very much in the spirit of this paper. The following finiteness property related to RFD-ness was introduced relatively recently in \cite{MR2679923}, and studied further e.g. in \cite{MR2681256,MR2914541}:

\begin{definition}
A Hopf algebra is said to be \define{inner linear} if it has a finite-codimensional ideal containing no non-trivial Hopf ideal. 
\end{definition}

In other words, it is supposed to have a finite-dimensional representation that does not factor through any proper Hopf algebra quotient. Moreover, there is a version appropriate for $*$-structures (\cite[5.1]{MR2681256}):

\begin{definition}
A complex Hopf $*$-algebra is \define{inner unitary} if it has a $*$-representation on a finite-dimensional Hilbert space whose kernel does not contain a non-zero Hopf $*$-ideal.  
\end{definition}

Inner unitarity is stronger than RFD-ness, as will become clear from the discussion below. Indeed, every $*$-algebra has a canonical RFD quotient, and according to \Cref{pr.RFD_quot_CQG} the RFD quotient of a CQG algebra is a CQG quotient. But then for a CQG algebra that is not RFD every finite-dimensional $*$-representation on a Hilbert space factors through the proper RFD quotient. Hence, there can be no representation exhibiting inner unitarity.

We refer the reader to the cited papers for further information on these notions. We do not prove inner unitarity or linearity for any of the CQG algebras under consideration here, but there are clearly relationships between these concepts that are worth noting.

The summary of the paper is as follows:

In the next section we collect some of the auxiliary material we need, mostly on the RFD property for $*$-algebras in general and for CQG algebras in particular. Most helpfully, it turns out that the coalgebra structure of a CQG algebra plays an important role in investigating RFD-ness (\Cref{cor.RFD_quot_CQG}). This means that in actually proving residual finite-dimensionality for a CQG algebra, we can do computations inside the category of comodules and hence avail ourselves of all of the extra structure that comes with having a comultiplication. I hope some of the material here will be of some independent interest.  

\Cref{se.main} contains the main result cited above, \Cref{th.main}. In addition, the passage from the RFD-ness of $A_u(n)$ to $B_u(n)$ is made through a more general result linking RFD-ness of a compact quantum group to that of a ``quotient group'' (\Cref{th.lift_RFD}). 

The end of \Cref{se.main} and the paper consists of a brief discussion of how all of this applies to maximal almost periodicity, in slightly more detail than we sketched above.

\section{Preliminaries}\label{se.prel}

All algebraic objects appearing below (algebras, coalgebras, Hopf algebras, etc.) live over the field $\bC$. All algebras and coalgebras are unital and respectively counital.

For background on coalgebra, comodule and Hopf algebra theory we refer to \cite{MR0252485,MR1243637,MR2894855}.

We denote comultiplications and counits of coalgebras by $\Delta$ and $\varepsilon$ respectively, perhaps with a subscript when wishing to indicate which coalgebra is being talked about (e.g. $\Delta_C$). Antipodes of Hopf algebras are usually denoted by $S$, with the same option of adding a subscript ($S_H$ for the antipode of $H$). We use Sweedler notation for comultiplication: $\Delta(h)=h_1\otimes h_2$

Unless specified otherwise, comodules are right and modules are left; the symbol $\cM^C$ stands for the category of $C$-comodules.

\subsection{CQG algebras}\label{subse.CQG}

The main references are \cite{MR1310296}, where the notion was first introduced, \cite[Sections 11.3, 11.4]{MR1492989} and the survey \cite{MR1741102}. We do not need to recall the concept in great detail. For our purposes, it is enough to remember that a CQG algebra is a Hopf $*$-algebra with an additional technical property ensuring that its comodules admit inner products invariant under the coaction in some sense that we will not make precise. 

Recall also that a Hopf $*$-algebra is a Hopf algebra as well as a $*$-algebra (i.e. it is equipped with an involutive, conjugate-linear, multiplication reversing self-map $*$) such that the comultiplication and the counit are $*$-algebra homomorphisms.

The condition we have not spelled out is meant to ensure that these objects behave in many ways like algebras of representative functions on compact groups\footnote{representative functions are linear spans of matrix coefficients coming from finite-dimensional representations of the compact group}. For this reason we also refer to a CQG algebra as a \define{compact quantum group}. 
 
Algebras of representative functions on ordinary compact groups provide examples, as do group algebras of discrete groups. On the other hand, the objects we work with below are

\begin{example}\label{ex.free}
For an $n\times n$ positive self-adjoint matrix $Q$, let $A_u(Q)$ be the $*$-algebra freely generated by the $n^2$ elements $u_{ij}$, $i,j=\overline{1,n}$ such the that $u=(u_{ij})$ and $Q^{\frac 12}\overline uQ^{-\frac 12}$ are both unitary as elements of $M_n(A_u(Q))$ (where $\overline u=(u_{ij}^*)$). 

$A_u(Q)$ can be made into a CQG algebra by declaring that $u_{ij}$ are the usual $n^2$ basis elements of a matrix coalgebra in the sense that \[ \Delta(u_{ij})=\sum_k u_{ik}\otimes u_{kj},\quad \varepsilon(u_{ij})=\delta_{ij} \] (a theme that will come up again and again in these examples; they are all defined by imposing relations on the $n^2$ matrix counits of an $n\times n$ matrix coalgebra). 

The algebras $A_u(Q)$ were introduced by Wang and van Daele in \cite{MR1382726}, and they are the quantum analogues of unitary groups: Every finitely generated CQG algebra is a quotient of one of them, meaning, in dual language, that every ``compact quantum Lie group'' embeds in the compact quantum group associated to $A_u(Q)$ for some $Q$.
%
%
\end{example}

\begin{example}\label{ex.ortho}
Now let $Q$ be a matrix with the property $Q\overline{Q}\in \bR I_n$ (just like the positivity assumption of the previous example, this avoids some redundance and degeneracy). The CQG algebra $B_u(Q)$ was also introduced in \cite{MR1382726} (denoted there by $A_o(Q)$) as the $*$-algbra obtained from $A_u(Q)$ by imposing the additional assumption that all $u_{ij}$ be self-adjoint. 
%
\end{example}

More specifically we focus on $A_u(I_n)$ and $B_u(I_n)$, which we denote by $A_u(n)$ and respectively $B_u(n)$ (or on occasion by $A$ and $B$).

Rephrasing \Cref{ex.free} slightly, $A=A_u(n)$ is the $*$-algebra freely generated by $n^2$ elements $u_{ij}$ subject to the constraints that both $(u_{ij})_{i,j}$ and $(u_{ji})_{i,j}$ be unitary elements of $M_n(A)$. The same goes for $B_u(n)$, except that we denote the generators by $v_{ij}$to avoid confusion, and we have the additional relations $v_{ij}^*=v_{ij}$.  On occasion, we refer to $A_u(n)$ and $B_u(n)$ as the \define{free unitary} and respectively \define{free orthogonal} compact quantum groups.  

The reason for specializing to $Q=I_n$ will become apparent below (see the discussion immediately preceding \Cref{se.main}). Briefly, the results we prove in the next two sections do not stand a chance of being true for CQG algebras whose antipodes do not square to the identity.

Just as algebras of representative functions separate points of compact groups and hence sit densely inside their $C^*$ completions, so too all CQG algebras can be completed to $C^*$-algebras (usually in more than one way). In fact, compact quantum groups appeared historically as $C^*$-algebras equipped with additional structure (\cite{MR901157,MR943923}), and only afterwards in their purely algebraic guise. 

Such completions will come up on occasion; the references cited in this section will do nicely for background on the more analytic aspects.

\subsection{Residually finite-dimensional \texorpdfstring{$*$}{*}-algebras and finite duals}

An algebra $A$ is usually said to be residually finite-dimensional if for any non-zero $a\in A$ there is some finite-dimensional module of $A$ which is not annihilated by $a$. We then also say that the finite-dimensional representations of $A$ \define{separate} the elements of $A$, or that they form a \define{separating family}. 

We are interested here in a modified version of this. Recall from the introduction that the main objects of study are $*$-algebras, and one tries to show that they have a separating family of $*$-representations on finite-dimensional Hilbert spaces. This justifies changing the standard term slightly to suit the present setting.

\begin{definition}\label{def.RFD}
A $*$-algebra is \define{residually finite-dimensional} or RFD if for any $0\ne a\in A$ there is some $*$-prepresentation $\pi$ of $A$ on a Hilbert space such that $\pi(a)\ne 0$.

The \define{RFD quotient} $\ol{A}$ of an arbitrary $*$-algebra $A$ is the quotient of $A$ by the intersection of the kernels of all $*$-homomorphisms $A\to M_n(\bC)$.   
\end{definition}

In other words, $\ol{A}$ is the largest quotient of $A$ which is RFD. It will be useful later on to give an alternate description of the RFD quotient as the image of a canonical map from $A$ into a kind of ``double dual''.

\begin{remark}\label{rem.comm_RFD}
Note that any commutative $*$-algebra $A$ that embeds in some $C^*$-algebra $B$ is automatically RFD. Indeed, the $C^*$-algebra homomorphisms from the closure of $A$ in $B$ to $\bC$ form a separating family of $1$-dimensional $*$-representations for $A$.  
\end{remark}

First, we recall the notation $A^\circ$ for the so-called finite dual of $A$ from \cite[Chapter VI]{MR0252485}. This is the subset of the full dual vector space $A^*$ consisting of $f\in A^*$ which vanish on a finite-codimensional ideal of $A$. Another way to phrase this is as follows: Consider all finite-dimensional representations $\pi:A\to M_n(\bC)$ (for all possible $n$). Composing further with the $n^2$ matrix entries $M_n(\bC)\to\bC$ we get functionals on $A$. Then, $A^\circ$ is the linear span of all of these functionals. 

The motivation for the introduction of $A^\circ$ in \cite{MR0252485} comes from the fact that it is the largest subspace of $A^*$ which can be endowed with a coalgebra structure via the dual $\Delta:A^*\to (A\otimes A)^*$ of the multiplication map $A\otimes A\to A$. This means that $A^\circ$ is the preimage of $A^*\otimes A^*\le (A\otimes A)^*$ in $A^*$, and $\Delta(A^\circ)$ is contained in $A^\circ\otimes A^\circ$. As a consequence, $A^\circ$ is always a coalgebra. Moreover, the contravariant functors $*:\cat{Coalgebras}\to\cat{Algebras}$ (full dual) and $\circ:\cat{algebras}\to\cat{Coalgebras}$ are adjoint on the right: There is a natural bijection
\[
	\text{coalgebra maps }C\to A^\circ\quad \cong\quad \text{algebra maps }A\to C^*. 
\]
In particular, there is a canonical algebra map $A\to A^{\circ *}$, which is simply the composition $A\to A^{**}\to A^{\circ *}$. 

Once more, it will be convenient to borrow the notation but change the meaning of `$\circ$' slightly to suit us better in the context of $*$-algebras.

\begin{definition}
For a $*$-algebra $A$, the \define{finite dual} $A^\circ$ is the linear span in $A^*$ of all matrix coefficients for all $*$-homomorphisms $A\to M_n(\bC)$. 
\end{definition}

Note that $A^\circ$ is what we will henceforth call a \define{$*$-coalgebra}: It admits an involutive conjugate linear map $*:A^\circ\to A^\circ$ which preserves the counit and reverses the comultiplication, defined by 
\begin{equation}\label{eq.dual_functional}
	f^*(a)=f(a^*)^*,\quad a\in A,\ f\in A^\circ,
\end{equation}
the outer star being complex conjugation of a number. The full dual of a $*$-coalgebra is a $*$-algebra with the $*$-structure \Cref{eq.dual_functional} again, and we will leave it to the reader to check that the composition $A\to A^{**}\to A^{\circ *}$ is a morphism of $*$-algebras. The connection with the previous discussion is as follows:

\begin{proposition}\label{pr.RFD_quot_alg}
For any $*$-algebra $A$, the RFD quotient $\ol{A}$ is the image of the canonical map $A\to A^{\circ *}$. 
\end{proposition}
\begin{proof}
This is tautological: An element of $A$ is annihilated by all $*$-homomorphisms $A\to M_n(\bC)$ if and only if it is annihilated by all $f\in A^\circ$, if and only if it maps to zero under $A\to A^{\circ *}$. 
\end{proof}

Below, we will also need to look into whether certain free products of $*$-algebras are RFD. To that end, we finish this subsection with the following result.

\begin{proposition}\label{pr.RFD_free_prod}
Let $A$ and $B$ be RFD $*$-algebras. Then, the coproduct $A*B$ in the category of $*$-algebras is RFD. 
\end{proposition}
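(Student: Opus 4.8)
The plan is to exploit the elementary fact that a finite-dimensional $*$-representation of the coproduct $A*B$ is nothing but a pair of finite-dimensional $*$-representations $\alpha\colon A\to M_N(\bC)$ and $\beta\colon B\to M_N(\bC)$ on one and the same Hilbert space $\bC^N$, with $\pi=\alpha*\beta$ recovered from the universal property of the coproduct. Thus proving $A*B$ RFD amounts to showing that for each $0\ne x\in A*B$ there is a common space $\bC^N$ and such a pair with $(\alpha*\beta)(x)\ne 0$. Since $x$ is supported on finitely many elements of $A$ and of $B$, I would first use RFD-ness of $A$ and $B$ to fix reference representations $\alpha_0\colon A\to M_m(\bC)$ and $\beta_0\colon B\to M_n(\bC)$ that are injective on the finite-dimensional spans of the letters of $x$, and then amplify them to a common space $\bC^N$, $N=mn$, by $\alpha=\alpha_0\otimes\id$ and $\beta=\id\otimes\beta_0$ on $M_m(\bC)\otimes M_n(\bC)=M_N(\bC)$.

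The difficulty is that for this naive amplification the images $\alpha(A)$ and $\beta(B)$ commute, so that $\alpha*\beta$ factors through the ordinary tensor product $A\otimes B$ and will in general annihilate $x$. The remedy is to move the two copies into general position by a unitary: for $U\in U(N)$ set $\pi_U=\alpha*(U\beta U^*)$, still a finite-dimensional $*$-representation of $A*B$. It then suffices to find one $U$ with $\pi_U(x)\ne 0$. Here I would pass to the complexification: the assignment
\[
g\longmapsto \Phi(g):=\bigl(\alpha*(g\beta g^{-1})\bigr)(x)\in M_N(\bC)
\]
is a regular $M_N(\bC)$-valued function on $GL_N(\bC)$, and since $U(N)$ is Zariski-dense in $GL_N(\bC)$, a function vanishing on all of $U(N)$ vanishes identically. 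Hence it is enough to produce a single invertible (not necessarily unitary) $g$ with $\Phi(g)\ne 0$.

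The heart of the matter, and the step I expect to be the main obstacle, is this nonvanishing. I would establish it through the reduced free product: the free product state $\mathrm{tr}_m*\mathrm{tr}_n$ of the two normalized (hence faithful) matricial traces has a faithful GNS $*$-representation of the algebraic free product $\alpha(A)*\beta(B)$ on the free Fock space $\cF$, so the image of $x$ is detected there as soon as it is nonzero in $\alpha(A)*\beta(B)$, which is arranged by the injectivity of $\alpha_0,\beta_0$ on the letters of $x$. To transport this infinite-dimensional witness back to a finite-dimensional $\Phi(g)$, I would invoke Voiculescu's asymptotic freeness: under further amplification and a Haar-random $U\in U(N)$, the pair $\bigl(\alpha(A),\,U\beta(B)U^*\bigr)$ becomes asymptotically free with respect to $\mathrm{tr}_N$ and models the reduced free product, so that a Weingarten computation gives
\[
\bE_U\,\mathrm{tr}_N\bigl(\pi_U(x)^*\pi_U(x)\bigr)\ \longrightarrow\ \tau\bigl(\pi(x)^*\pi(x)\bigr)>0,
\]
$\tau$ being the free product trace; positivity of the limit forces $\pi_U(x)\ne 0$ for some $U$, and in particular $\Phi\not\equiv 0$.

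The routine parts are the universal-property bookkeeping, the amplification, and the Zariski-density reduction; the genuine work is the strict positivity of the limiting moment, that is, ensuring the reference representations $\alpha_0,\beta_0$ can be chosen rich enough that the limiting free product stays faithful on $x$, together with the combinatorics of reduced words underlying the asymptotic-freeness estimate.
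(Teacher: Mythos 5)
Your argument is correct in substance, but it follows a genuinely different route from the paper's. The paper argues categorically: it writes each of $A$ and $B$ as a filtered limit of its finite-dimensional matrix quotients, uses the facts that coproducts of algebra embeddings are embeddings and that filtered limits commute with finite colimits to embed $A*B$ into $\varprojlim (A_i*B_j)$ with $A_i,B_j$ finite-dimensional $C^*$-algebras, and then quotes the Exel--Loring theorem (free products of RFD $C^*$-algebras are RFD) as a black box to handle each $A_i*B_j$. You instead prove the needed finite-dimensional case directly: after cutting $A$ and $B$ down to matrix algebras by representations faithful on the letters of $x$, you detect the image of $x$ via faithfulness of the free product trace on the algebraic free product (the Fock-space orthogonality argument), and then transfer this infinite-dimensional witness back to matrices by Voiculescu's asymptotic freeness for Haar-conjugated constant matrix algebras. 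In effect you re-prove the relevant case of Exel--Loring by random-matrix methods. What the paper's route buys is brevity and the avoidance of any analysis beyond a citation; what yours buys is a self-contained and more constructive argument---an explicit random-matrix model for the separating representations---at the price of invoking two substantial inputs from free probability (faithfulness of free product traces and asymptotic freeness).

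Two small repairs are needed. First, injectivity of $\alpha_0$ on the span of the letters of $x$ alone is not enough: if some letter $a\notin\bC 1$ has $\alpha_0(a)=1$, words can telescope---for instance $x=ab-b$ is nonzero in $A*B$ but dies under $\alpha_0*\beta_0$, even though $\alpha_0$ is injective on $\bC a$. You must demand injectivity on the span of the letters \emph{together with the unit} (which the usual direct-sum trick supplies just as easily); then the recentred-word Fock-space argument shows $(\alpha_0*\beta_0)(x)\neq 0$ and, more to the point, $\tau\bigl((\alpha_0*\beta_0)(x)^*(\alpha_0*\beta_0)(x)\bigr)>0$. Second, the Zariski-density reduction is harmless but redundant: your asymptotic-freeness step already produces a unitary $U$ with $\pi_U(x)\neq 0$, since $\mathrm{tr}_N\bigl(\pi_U(x)^*\pi_U(x)\bigr)\geq 0$ pointwise while its expectation converges to a strictly positive limit.
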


Before going into the proof, let us note that an analogous statement holds for $C^*$-algebras if one defines the RFD property as above, but considering only continuous homomorphisms $A\to M_n(\bC)$. The category of $C^*$-algebras has coproducts, and then, according to \cite[Theorem 3.2]{MR1168356}, \Cref{pr.RFD_free_prod} holds verbatim for $C^*$-algebras $A$ and $B$. We will use that result in the proof of \Cref{pr.RFD_free_prod}, but indirectly. For one thing, arbitrary $*$-algebras need not have enveloping $C^*$-algebras, and so it is unclear how to immediately reduce the proposition to its analytic version; for another, it is not clear to me whether the enveloping $C^*$-algebra of an RFD $*$-algebra, if it exists, is again RFD in the sense of Exel and Loring.

\begin{proof}[Proof of \Cref{pr.RFD_free_prod}]
Consider the sets $I$ and $J$ of surjections $\pi_i:A\to A_i$ and $\pi_j:B\to B_j$ respectively. We can partially order $I$ by $\pi_i\le\pi_{i'}$ if $\pi_{i'}$ factors through $\pi_i$, and similarly for $J$.
\[
		\tikz[anchor=base]{
  		\path (0,0) node (A) {$A$} +(2,0) node (Ai) {$A_i$} +(4,0) node (Ai') {$A_{i'}$};
  		\draw[->] (A) -- (Ai) node[pos=.5,auto] {$\pi_i$};
  		\draw[->] (A) .. controls (1,-.5) and (3,-.5) .. (Ai') node[pos=.5,auto,swap] {$\pi_{i'}$};
  		\draw[->,dashed] (Ai) -- (Ai');
  	}
\]
Note that $I$ and $J$ are both \define{filtered}: For any two elements in $I$ (or $J$) there is an element in $I$ (respectively $J$) less than or equal to both. Indeed, if $\pi_i$ and $\pi_{i'}$ are elements of $I$, then the quotient modulo the intersection $\ker(\pi_i)\cap\ker(\pi_{i'})$ is smaller than either of them. 

Regarding the posets $I$ and $J$ as categories in the usual way, with an arrow for each $\le$ relation between two elements, $i\mapsto A_i$ and $j\mapsto B_j$ are functors from $I$ and $J$ respectively to the category of $*$-algebras. The RFD hypothesis means simply that the resulting morphisms $A\to\varprojlim_I A_i$ and $B\to\varprojlim_JB_j$ are one-to one, where $\varprojlim$ means limit in the category of $*$-algebras. 

Since coproducts of embeddings in the category of algebras are again embeddings, the canonical map $A*B\to \left(\varprojlim_I A_i\right)*\left(\varprojlim_J B_j\right)$ is one-to-one. The right hand side is canonically isomorphic to $\varprojlim_{I\times J}(A_i*B_j)$, where $I\times J$ is the product poset with $(i,j)\le (i',j')$ iff $i\le i'$ and $j\le j'$. This follows from the fact that $I$ and $J$ are filtered as noted above, and filtered limits commute with finite colimits in any category where they make sense (this is the categorical dual to \cite[Theorem IX.2.1]{MR1712872}). 

In conclusion, we are embedding the $*$-algebra $A*B$ into $\varprojlim_{I\times J}(A_i*B_j)$. We would be done if we knew that $A_i*B_j$ are RFD, but now we can make use of the Exel-Loring result cited above: Being finite-dimensional $C^*$-algebras, $A_i$ and $B_j$ are RFD in the $C^*$ sense, and hence \cite[Theorem 3.2]{MR1168356} applies to them. It implies that the $C^*$-completion is RFD in the $C^*$ sense, and it is easy to see in this case that the $*$-algebra $A_i*B_j$ embeds in its $C^*$-completion.   
\end{proof}

\begin{remark}
\Cref{pr.RFD_free_prod} is purely algebraic, and it should have a purely algebraic proof. It does, but going through \cite{MR1168356} provided a shorter argument. 
\end{remark}

\subsection{Residually finite-dimensional CQG algebras and Hopf duals}

Keeping in mind our goal of eventually proving that $A_u(n)$ and $B_u(n)$ are RFD, we specialize some of the discussion above to the case of CQG algebras.  

Recall from the previous subsection the adjoint contravariant functors between algebras and coalgebras implemented by taking duals and finite duals. One might think that in the context of the present paper, where we have modified `$\circ$' to take into account $*$-structures, the analogous result holds: $*$ and our version of $\circ$ implement an adjunction on the right between $*$-algebras and $*$-coalgebras. This is not true! The problem is that $\circ$ has to do with mapping not into arbitrary finite-dimensional $*$-algebras, but rather into finite-dimensional $C^*$-algebras. As a consequence, the corresponding category of $*$-coalgebras that will make the adjunction work is smaller. Instead of spelling out how that works, we consider straight away only the case when everything in sight is a CQG algebra, and is hence both an algebra and a coalgebra.

Recall that the old version of $\circ$, defined in the absence of $*$-structures, always turns Hopf algebras into Hopf algebras and implements a contravariant functor on the category of Hopf algebras that is self-adjoint on the right; see e.g. \cite[$\S$6.2]{MR0252485} for the first claim, and we leave the second one as an exercise. The analogue of this goes through for CQG algebras. We unpack this below. 

Note that any CQG algebra $A$ is naturally both a $*$-algebra (by definition) and a $*$-coalgebra. In fact, we can make it into a $*$-coalgebra in two ways, as both $S*$ and $*S$ are comultiplication-reversing, conjugate-linear involutions. We choose the latter structure: The involution making $A$ into a $*$-coalgebra in the sequel will be $a\mapsto (Sa)^*$. 

Next we observe that for every CQG algebra $A$, the $*$-coalgebra $A^\circ$ is again a CQG algebra. Indeed, one first argues that it is a Hopf algebra as in \cite[$\S$6.2]{MR0252485}. The $*$-algebra structure is given by
\[
	f^*(a)=f((Sa)^*)^*,\quad a\in A,\ f\in A^\circ,
\]
i.e. it is obtained from the $*$-coalgebra structure $a\mapsto (Sa)^*$ of $A$ via \Cref{eq.dual_functional}. It is an easy check now that the $*$-algebra and $*$-coalgebra structures on $A^\circ$ are compatible with the antipode 
\[
	(Sf)(a) = f(Sa),\quad a\in A,\ f\in A^\circ
\]
in precisely the right way: The $*$-coalgebra involution is $f\mapsto (Sf)^*$ for $f\in A^\circ$. Finally, we have to argue that the comodules of $A^\circ$ are unitarizable. We won't do this in any detail, but the idea is that the category of $A^\circ$-comodules is equivalent to that of $*$-representations of $A$ on finite-dimensional Hilbert spaces. In other words, $A^\circ$ will be exactly the CQG algebra reconstructed from this category by the general procedure described in \cite[Theorem 1.3]{MR943923} (or rather a minor modification thereof).

\begin{definition}\label{def.Hopf_dual}
For a CQG algebra $A$ we refer to $A^\circ$ endowed with the structures described above as the \define{Hopf dual} or \define{CQG dual} of $A$. 
\end{definition}

I claim further that as previewed above, the contravariant functor $A\mapsto A^\circ$ on the category of CQG algebras is self-adoint on the right, i.e. we have bijections 
\[
	\text{CQG morphisms }A\to B^\circ\quad \cong\quad \text{CQG morphisms }B\to A^\circ,
\] 
natural in $A$ and $B$ in the obvious sense. This follows from the fact that both sides of this expression correspond bijectively to pairings $\langle\cdot,\cdot\rangle:A\otimes B\to \bC$ that respect all the structure: 
\begin{itemize}
	\item The multiplication and comultiplication of the two Hopf algebras, via
				\[
					\langle x,uv\rangle = \langle x_1,u\rangle\langle x_2,v\rangle,\quad \forall x\in A,\quad \forall u,v\in B
				\]
				and
				\[
					\langle xy,u\rangle = \langle x,u_1\rangle\langle y,u_2\rangle,\quad \forall x,y\in A,\quad \forall u\in B.
				\]
	\item Units and counits by
				\[
					\langle x,1_B\rangle = \varepsilon_A(x) \text{ and } \langle 1_A,u\rangle = \varepsilon_B(u),\quad \forall x\in A,\quad \forall u\in B.
				\]
	\item The antipode through
				\[
					\langle S_Ax,u\rangle = \langle x,S_Bu\rangle,\quad \forall x\in A,\quad \forall u\in B,  
				\]
	\item and the $*$-structures by
				\[
					\langle x^*,(S_Bu)^*\rangle = \langle (S_Ax)^*,u^*\rangle = \langle x,u\rangle^*,\quad \forall x\in A,\quad \forall u\in B,
				\]where the very last `$*$' means complex conjugation.  
\end{itemize}

Finally, the self-adjunction of $\circ$ means that the canonical $*$-algebra map $A\to A^{\circ *}$, which makes sense for all $*$-algebras, actually factors through a CQG morphism $A\to A^{\circ\circ}$. The next result now follows from \Cref{pr.RFD_quot_alg}.

\begin{proposition}\label{pr.RFD_quot_CQG}
For any $*$-algebra $A$, the RFD quotient $\ol{A}$ is the image of the canonical morphism $A\to A^{\circ \circ}$ of CQG algebras. \qedhere
\end{proposition}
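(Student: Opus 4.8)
The plan is to combine the two facts the excerpt has just assembled: by \Cref{pr.RFD_quot_alg}, the RFD quotient $\ol{A}$ is the image of the canonical $*$-algebra map $A\to A^{\circ *}$, and by the self-adjunction of $\circ$ on CQG algebras (the displayed natural bijection between CQG morphisms $A\to B^\circ$ and $B\to A^\circ$), this canonical map factors as a CQG morphism $A\to A^{\circ\circ}$ followed by the inclusion $A^{\circ\circ}\hookrightarrow A^{\circ *}$. So the statement should follow almost formally once these two ingredients are aligned.

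Concretely, I would first invoke the self-adjunction with $B=A^\circ$: applying it to the identity morphism $\id:A^\circ\to A^\circ$ (viewed as a CQG morphism $A^\circ\to (A^\circ)$, i.e. as an element of the right-hand set with roles suitably arranged) produces the canonical CQG morphism $\eta_A:A\to A^{\circ\circ}$. Next I would check that $\eta_A$ is, as a map of underlying $*$-algebras, the same as the canonical $*$-algebra map $A\to A^{\circ *}$ corestricted to $A^{\circ\circ}\subseteq A^{\circ *}$; this is exactly the content of the sentence preceding the proposition, namely that ``the canonical $*$-algebra map $A\to A^{\circ *}$ $\ldots$ actually factors through a CQG morphism $A\to A^{\circ\circ}$.'' Granting this compatibility, the inclusion $A^{\circ\circ}\subseteq A^{\circ *}$ is injective, so the image of $A\to A^{\circ *}$ coincides with the image of $A\to A^{\circ\circ}$. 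By \Cref{pr.RFD_quot_alg} the former is $\ol{A}$, and we are done.

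The one genuinely substantive point, and the step I expect to require the most care, is verifying that the abstractly-produced CQG morphism $A\to A^{\circ\circ}$ really does agree with the concrete double-dual evaluation map $a\mapsto \big(f\mapsto f(a)\big)$ when both are regarded as landing in $A^{\circ *}$. This amounts to unwinding the natural bijection of the self-adjunction in terms of the canonical pairing $\langle\,\cdot\,,\,\cdot\,\rangle:A\otimes A^\circ\to\bC$, $\langle a,f\rangle=f(a)$, and confirming that the morphism it assigns to the identity pairing is exactly evaluation. Since the excerpt has already stated (and left to the reader) that $A\to A^{\circ *}$ is a morphism of $*$-algebras and factors through $A^{\circ\circ}$, I would treat this coherence as established and merely record that, under it, images correspond.

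In summary, the proof is a two-line deduction: \Cref{pr.RFD_quot_alg} identifies $\ol{A}$ with the image of $A\to A^{\circ *}$; the self-adjunction of the CQG-dual functor factors this through the CQG morphism $A\to A^{\circ\circ}$ with $A^{\circ\circ}\hookrightarrow A^{\circ *}$ injective; hence $\ol{A}$ equals the image of $A\to A^{\circ\circ}$, and the image of a CQG morphism is again a CQG algebra, giving the desired refinement of \Cref{pr.RFD_quot_alg} from $*$-algebras to CQG algebras.
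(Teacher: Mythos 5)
Your proposal is correct and matches the paper's own argument: the paper gives no separate proof, asserting exactly that the self-adjunction of $\circ$ factors the canonical map $A\to A^{\circ *}$ through a CQG morphism $A\to A^{\circ\circ}$, whence the statement follows from \Cref{pr.RFD_quot_alg}. Your write-up simply makes explicit the coherence check (that the adjunction unit is the evaluation map) which the paper leaves implicit.
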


In particular, the RFD quotient is actually a quotient CQG algebra of $A$. This means that the coalgebra structure of $A$ is relevant to determining whether or not $A$ is RFD. Before recording this as a statement, we introduce some terminology.

\begin{definition}\label{def.jointly_full}
Let $\cC$ and $\cC_i$, $i\in I$ be concrete categories, i.e. such that objects are sets and morphisms are set maps via faithful functors from $\cC$ and $\cC_i$ to $\cat{Set}$. 

A family of functors $F_i:\cC\to\cC_i$ that preserve underlying sets and maps is said to be \define{jointly full} if any map of sets $f:S\to T$ which is in the image of all underlying functors $\cC_i\to\cat{Set}$ is in the image of $\cC\to\cat{Set}$.
\end{definition}

\begin{corollary}\label{cor.RFD_quot_CQG}
A CQG algebra $A$ is RFD if and only if there is a family of RFD quotient CQG algebras $A\to A_i$ such that the scalar corestriction functors $\cM^A\to\cM^{A_i}$ form a jointly full family. 
\end{corollary}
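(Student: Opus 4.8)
The plan is to run everything through the reformulation of \Cref{pr.RFD_quot_CQG}: a CQG algebra $A$ is RFD exactly when the canonical CQG surjection $\overline r\colon A\to\overline A=A^{\circ\circ}$ is injective. The crux --- and what I expect to be the main obstacle --- is the following purely coalgebraic lemma, which I would isolate first: for \emph{any} surjection $r\colon A\to B$ of CQG algebras, the scalar corestriction functor $r_*\colon\cM^A\to\cM^B$ is full if and only if $r$ is injective. Everything else is formal bookkeeping around this lemma.

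To prove the lemma I would use that CQG algebras are cosemisimple (Peter--Weyl, \cite{MR1310296}): $A=\bigoplus_\lambda C_\lambda$ is the direct sum of the matrix coalgebras of coefficients of its pairwise non-isomorphic irreducible comodules $V_\lambda$, and likewise for $B$. A nonzero coalgebra map out of a simple coalgebra is injective (its kernel is a coideal of a simple coalgebra), and $r|_{C_\lambda}$ is nonzero because $\varepsilon=\varepsilon_B\circ r$ does not vanish on $C_\lambda$; hence $r$ embeds each $C_\lambda$ isomorphically onto a simple subcoalgebra of $B$, so $r_*V_\lambda$ is again simple. If $r$ is not injective, a nonzero element of $\ker r$ must mix at least two summands, and since distinct simple subcoalgebras of $B$ are in direct sum this forces two non-isomorphic simples $V_\lambda,V_{\lambda'}$ to satisfy $r_*V_\lambda\cong r_*V_{\lambda'}$; a $B$-colinear isomorphism between them then lives outside $\Hom^A(V_\lambda,V_{\lambda'})=0$, witnessing that $r_*$ is not full. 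The converse is trivial, as a surjection that is injective is an isomorphism.

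Next I would translate \Cref{def.jointly_full} for the functors at hand. For $A$-comodules $(V,\rho_V),(W,\rho_W)$ and a linear $f\colon V\to W$, write the defect $D_f=\rho_W\circ f-(f\otimes\id)\circ\rho_V\colon V\to W\otimes A$; then $f$ is a morphism in $\cM^{A_i}$ after corestriction exactly when $(\id_W\otimes q_i)\circ D_f=0$, i.e. $\operatorname{im}(D_f)\subseteq W\otimes\ker q_i$, while $f$ is a morphism in $\cM^A$ exactly when $D_f=0$. Joint fullness of the family $\{q_{i*}\}$ therefore says precisely that, for all such $V,W,f$, the inclusion $\operatorname{im}(D_f)\subseteq W\otimes\bigcap_i\ker q_i$ forces $D_f=0$, where I use $\bigcap_i(W\otimes\ker q_i)=W\otimes\bigcap_i\ker q_i$ (valid because we are over a field). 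The forward implication of the corollary is now immediate: if $A$ is RFD then $\overline A=A$, and the one-element family consisting of $\id\colon A\to A$ (an RFD quotient by hypothesis) has identity corestriction functor, which is trivially jointly full.

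For the converse, suppose a jointly full family $\{q_i\colon A\to A_i\}$ of RFD quotients is given, and set $N=\ker\overline r$. Because each $A_i$ is RFD, $\ker q_i$ is an intersection of kernels of finite-dimensional $*$-representations of $A$, so $N\subseteq\ker q_i$ for every $i$, whence $N\subseteq\bigcap_i\ker q_i$. Thus whenever $\operatorname{im}(D_f)\subseteq W\otimes N$ we also have $\operatorname{im}(D_f)\subseteq W\otimes\bigcap_i\ker q_i$, and joint fullness yields $D_f=0$; this is exactly the assertion that $\overline r_*\colon\cM^A\to\cM^{\overline A}$ is full. The key lemma applied to $\overline r$ then forces $N=0$, i.e. $\overline r$ is injective and $A=\overline A$ is RFD. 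This route deliberately sidesteps the fact that an infinite intersection $\bigcap_i\ker q_i$ of Hopf ideals need not be a coideal: I only ever work with the genuine Hopf ideal $N$, and use the $q_i$ merely through the containment $N\subseteq\ker q_i$.
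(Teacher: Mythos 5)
Your overall architecture coincides with the paper's: the forward direction via the identity quotient, and the converse by showing that the corestriction functor $\ol{r}_*\colon\cM^A\to\cM^{\ol{A}}$ along the universal RFD quotient is full (your reduction, via $N=\ker\ol{r}\subseteq\ker q_i$ for every $i$, is exactly the paper's observation that each $q_i$ factors through $\ol{r}$), followed by the coalgebraic fact that a surjection of cosemisimple coalgebras is injective if and only if its corestriction functor is full. That fact, your ``key lemma,'' is true, and it is precisely what the paper invokes. The gap is in your proof of it.

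The assertion ``a nonzero coalgebra map out of a simple coalgebra is injective'' is false, and the parenthetical justification conflates coideals with subcoalgebras: the kernel of a coalgebra map is a \emph{coideal}, and while a simple coalgebra has no proper nonzero subcoalgebras, a matrix coalgebra of size $n\ge 2$ has many proper nonzero coideals (dually, $M_n(\bC)$ has proper unital subalgebras). Concretely, the counit $\varepsilon\colon A\to\bC$ is a surjection of CQG algebras onto the trivial one; restricted to the coefficient coalgebra $C_\lambda$ of a simple comodule $V_\lambda$ with $\dim V_\lambda\ge 2$ it is nonzero but has kernel of dimension $\dim C_\lambda-1>0$, and the corestriction of $V_\lambda$ along $\varepsilon$ is a trivial comodule of dimension $\ge 2$, not simple. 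So your intermediate claim that $r_*V_\lambda$ is simple for an \emph{arbitrary} CQG surjection $r$ is also false: preservation of simplicity cannot be had for free, it must be \emph{deduced from fullness}, which is what the paper's sketch does. The repair: if $r_*$ is full then $\End^B(r_*V_\lambda)=\End^A(V_\lambda)=\bC$, and over the cosemisimple coalgebra $B$ a comodule with scalar endomorphisms is simple; then $r(C_\lambda)$ is the coefficient coalgebra of the simple comodule $r_*V_\lambda$, hence a simple subcoalgebra of dimension $(\dim V_\lambda)^2=\dim C_\lambda$, so $r|_{C_\lambda}$ is injective after all. With that established, your direct-sum argument (distinct simple subcoalgebras of $B$ sum directly, so non-injectivity of $r$ forces $r(C_\lambda)=r(C_{\lambda'})$ and hence $r_*V_\lambda\cong r_*V_{\lambda'}$ for some $\lambda\ne\lambda'$, contradicting $\Hom^A(V_\lambda,V_{\lambda'})=0$) goes through verbatim. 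Since you only ever apply the lemma to $\ol{r}$, whose corestriction functor you have already proved full, this fix closes the gap without altering anything else in your proof.
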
 
\begin{proof}
One direction is immediate: If $A$ is RFD, then we can take the identity $A\to A$ itself for our RFD quotient CQG algebra. 

Conversely, assume there is a family of $A_i$'s as in the statement. I claim that the corestriction functor $\cM^A\to\cM^{\ol{A}}$ via the RFD quotient $A\to\ol{A}$ is full. Assuming this for a moment, the concluson follows from the fact that a map of complex cosemisimple coalgebras (such as $A\to\ol{A}$) is one-to-one if and only if the corresponding corestriction functor is full. Indeed, fullness of the corestriction functor is clearly equivalent to non-isomorphic simple $A$-comodules being sent to non-isomorphic simple $\ol{A}$-comodules; writing a cosemisimple coalgebra as a direct sum of coefficient subcoalgebras for its simple comodules finishes the argument. 

To prove the fullness claim, note first that since $A\to A_i$ are RFD quotients, they all factor through the universal RFD quotient $A\to\ol{A}$. But then for any two comodules $V,W\in\cM^A$, any $\ol{A}$-comodule map $f:V\to W$ is in particular an $A_i$-comodule map for every $i$. The joint fullness hypothesis implies that it is also an $A$-comodule map, and we are done.   
\end{proof}

\Cref{cor.RFD_quot_CQG} will be the main tool in the next section. Before we end this one, a word about why we had to settle for $A_u(n)$ and $B_u(n)$ rather than, for instance, the more general CQG algebras $A_u(Q)$ and $B_u(Q)$ from \Cref{ex.free,ex.ortho}. 

The issue is that for a CQG algebra to be RFD it must be what's usally referred to as \define{Kac type}: the antipode is automatically involutive. This is essentially \cite[Corollary A.3]{MR2210362}. That result deals with $C^*$-algebraic compact quantum groups rather than CQG algebras, but an RFD algebra always embeds in an RFD $C^*$-agebraic compact quantum group, to which we can then apply the cited corollary.

\section{Universal CQG algebras are residually finite-dimensional}\label{se.main}

The main result of the paper is

\begin{theorem}\label{th.main}
For any positive integer $n\ge 2$ that is different from $3$, the CQG algebras $A_u(n)$ and $B_u(n)$ are RFD in the sense of \Cref{def.RFD}. 
\end{theorem}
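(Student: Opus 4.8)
The plan is to put all the real work into the free unitary algebra $A=A_u(n)$ and to deduce $B_u(n)$ formally: since $B_u(n)$ is a CQG quotient of $A_u(n)$ (one imposes $u_{ij}=u_{ij}^*$), \Cref{th.lift_RFD} is exactly the vehicle for transporting the RFD property from $A_u(n)$ to $B_u(n)$. So I concentrate on showing that $A_u(n)$ is RFD for $n\neq 3$.

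Rather than separating elements of $A$ one at a time, I would invoke \Cref{cor.RFD_quot_CQG} and instead exhibit a family of RFD quotient CQG algebras $A\to A_i$ whose scalar--corestriction functors $\cM^A\to\cM^{A_i}$ are jointly full; RFD-ness then follows. The point of this reformulation is that everything moves into the comodule category $\cM^A$, which by the Tannaka--Krein description of $A_u(n)$ (see \cite{MR1484551}) is the rigid tensor category generated by the fundamental corepresentation $u$ and its conjugate $\bar u$, with the morphism space between any two tensor words in $u,\bar u$ spanned by the non-crossing pairings compatible with the $u$/$\bar u$ colouring. For $n\geq 2$ these diagrams are linearly independent, so the $\mathrm{Hom}$-spaces of $\cM^A$ are known exactly.

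A finite-dimensional $*$-representation of $A$ on a Hilbert space $H$ is the same datum as a \emph{bi-unitary} $U\in M_n(\mathrm{End}(H))$, namely a matrix for which both $U$ and $\bar U$ are unitary; closing any set of such representations under tensor products, conjugates and direct sums yields a genuine RFD quotient CQG algebra $A_i$ of $A$ (cf. \Cref{pr.RFD_quot_CQG}). Every non-crossing colour-compatible pairing is a genuine intertwiner in each of these representations, so $\mathrm{Hom}_{\cM^A}(X,Y)$ always embeds into each $\mathrm{Hom}_{\cM^{A_i}}(X,Y)$; joint fullness is precisely the reverse inclusion after intersecting over the family, i.e. that any operator commuting with the chosen bi-unitaries on all tensor words is forced to be a combination of non-crossing pairings. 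I would assemble the family from two kinds of model: the diagonal--torus quotient $A_u(n)\to\bC[F_n]$ sending $u$ to $\mathrm{diag}(g_1,\dots,g_n)$ for free generators $g_i$ — which is a CQG quotient that is RFD because $F_n$ is residually finite, and which \Cref{pr.RFD_free_prod} lets me combine freely — together with genuinely higher-dimensional ($\dim H>1$) bi-unitary models.

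The main obstacle, and the entire arithmetic content of the theorem, is the construction of enough higher-dimensional models and the proof that their common intertwiner space collapses onto exactly the non-crossing diagram space. The classical points ($H=\bC$, so $U\in U(n)$) only cut the intertwiners down to the span of all colour-compatible pairings, including crossings, which is strictly larger than the non-crossing part once $n$ is not small; genuinely noncommutative models are therefore unavoidable. I would establish the collapse either by a density argument — the generic point of the (compact, real algebraic) variety of bi-unitaries in $M_n(\mathrm{End}(H))$ should have intertwiner space equal to the diagram space, and a Euclidean-dense set of honest representations realizes that generic value — or, and I expect this to be what is actually needed, by an explicit construction of finite models adapted to $n$. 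It is here that I anticipate the hypothesis $n\neq 3$ entering, as a representability or divisibility constraint on the explicit models: there is room to separate every non-crossing diagram precisely when $n\neq 3$. Carrying out this collapse, and checking that the resulting functors are jointly full in the exact sense of \Cref{def.jointly_full}, is the step I expect to be genuinely hard; once it is in place, \Cref{cor.RFD_quot_CQG} gives that $A_u(n)$ is RFD, and \Cref{th.lift_RFD} then gives $B_u(n)$.
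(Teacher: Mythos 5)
Your reduction of $B_u(n)$ to $A_u(n)$ rests on a misreading of \Cref{th.lift_RFD}, and this is a genuine error rather than a shortcut. That theorem does not transport the RFD property to a quotient: it relates a CQG algebra $H$ to the \emph{Hopf kernel} $P\le H$ (a Hopf \emph{sub}algebra) of a normal surjection $H\to\bC[\Gamma]$ onto the group algebra of a \emph{finite} group $\Gamma$, and asserts that $H$ is RFD iff $P$ is. The surjection $A_u(n)\to B_u(n)$ you invoke (imposing $u_{ij}=u_{ij}^*$) is not of this form, and no general principle can stand in for it, because residual finite-dimensionality simply does not pass to quotients: $\bC[F_2]$ is RFD, yet $F_2$ has quotient groups admitting no nontrivial finite-dimensional unitary representations, and their group algebras are quotient CQG algebras of $\bC[F_2]$ that are not RFD. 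The paper's actual route runs in the opposite direction, through a subalgebra: if $A_u(n)$ is RFD then so is its subalgebra $A'=\langle u_{ij}^*u_{kl}\rangle$; \Cref{pr.free_vs_ortho} identifies $A'\cong B'=\langle v_{ij}v_{kl}\rangle$; and $B'$ is exactly the Hopf kernel of the central surjection $B_u(n)\to\bC[\bZ/2\bZ]$, $v_{ij}\mapsto\delta_{ij}t$, so \Cref{th.lift_RFD} (with $\Gamma=\bZ/2\bZ$ finite) lifts RFD from $B'$ up to $B_u(n)$.

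For $A_u(n)$ itself your framework coincides with the paper's --- \Cref{cor.RFD_quot_CQG}, the diagrammatic description of $\cM^A$ by non-crossing pairings, and their linear independence --- but the entire substance of the theorem is precisely the part you leave open: you never exhibit the higher-dimensional quotients, and you defer the collapse of the common intertwiner space onto the non-crossing span to either an unproven genericity claim about bi-unitaries or unspecified ``explicit finite models.'' That collapse \emph{is} the proof, not a finishing verification. The paper carries it out by induction, using only two quotients at each stage: the abelianization $C(U(n))$, and a block-diagonal quotient $D$ equal to $A_u(2)*A_u(2)$ when $n=4$ and to $\bC[t,t^{-1}]*A_u(n-1)$ when $n\ge 5$; these are RFD by the base case (\Cref{le.n=2}: $A_u(2)$ embeds in $\bC[t,t^{-1}]*C(SU(2))$, then \Cref{rem.comm_RFD} and \Cref{pr.RFD_free_prod}), by induction, and by \Cref{pr.RFD_free_prod}. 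Joint fullness is then proved by a degeneration (``folding'') argument: a common intertwiner restricted to a block summand $U^{\otimes s}\otimes W^{\otimes(k-s)}$ acts as a combination of non-crossing permutations; equivariance under the general linear group makes this combination independent of the choice of complement $U'$, continuity lets $U'$ degenerate into $W$, and linear independence of non-crossing pairings (\Cref{rem.lin_ind}) forces the coefficients to vanish. Your guess about where $n\ne 3$ enters is also off the mark: it is not a representability or divisibility constraint on finite models, but an artifact of this induction --- the step from $n-1$ to $n$ needs non-crossing permutations on the $(n-2)$-dimensional complement $W'$ to be linearly independent, i.e.\ $n\ge 4$, so $n=3$ cannot be reached from $n=2$, while $n=4$ is instead reached directly through the $2+2$ block decomposition.
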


\begin{remark}
The statement also holds for $n=1$, but in that case it is immediate. We assume $n\ge 2$ to avoid having to deal with trivial exceptions to various arguments. I believe the result holds for $n=3$ as well, but the proof below does not cover that case; a tweak will likely do the trick, but I have been unable to find one so far.
\end{remark}

To simplify life somewhat, let's first reduce the problem to proving the RFD property for only one of the two algebras. To this end, we need to know a little about how $A=A_u(n)$ and $B=B_u(n)$ relate to one another. 

Consider the algebra $A'\le A$ generated by the elements $u^*_{ij}u_{kl}$. Similarly, let $B'\le B$ be the subalgebra generated by $v_{ij}v_{kl}$. The first auxiliary result is as follows.

\begin{proposition}\label{pr.free_vs_ortho}
There is a CQG algebra isomorphism $A'\cong B'$ defined by $u^*_{ij}u_{kl}\mapsto v_{ij}v_{kl}$.
\end{proposition}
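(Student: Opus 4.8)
The plan is to realize the asserted correspondence as the restriction of the obvious surjection $A\twoheadrightarrow B$, which makes well-definedness and surjectivity automatic, and then to treat injectivity separately by embedding $A$ into a free complexification of $B$.

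First I would check that $A'$ and $B'$ really are CQG subalgebras, so that the statement makes sense. Using $\Delta(u_{ij})=\sum_m u_{im}\otimes u_{mj}$ together with the fact that $\Delta,\varepsilon$ are $*$-homomorphisms, one finds $\Delta(u_{ij}^*u_{kl})=\sum_{m,p}u_{im}^*u_{kp}\otimes u_{mj}^*u_{pl}\in A'\otimes A'$ and $\varepsilon(u_{ij}^*u_{kl})=\delta_{ij}\delta_{kl}$; closure under $*$ is immediate, and since the choice $Q=I_n$ forces the Kac condition $S^2=\id$ (so that $S$ commutes with $*$ and $S(u_{ij})=u_{ji}^*$) one gets $S(u_{ij}^*u_{kl})=u_{lk}^*u_{ji}\in A'$. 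The identical computation, with $v$ in place of $u$ and $v_{ij}^*=v_{ij}$, shows $B'$ is a CQG subalgebra.

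Second, I would exploit the canonical surjection. Because $v=(v_{ij})$ is orthogonal, both $v$ and its conjugate $\overline v=v$ are unitary, so by the universal property of $A_u(n)$ from \Cref{ex.free} there is a CQG morphism $\pi:A\to B$ with $\pi(u_{ij})=v_{ij}$. On the quadratic generators $\pi(u_{ij}^*u_{kl})=v_{ij}^*v_{kl}=v_{ij}v_{kl}$, so $\pi$ restricts to a surjective CQG morphism $\phi:A'\to B'$ implementing $u_{ij}^*u_{kl}\mapsto v_{ij}v_{kl}$. This settles well-definedness, surjectivity and compatibility with all of the structure in one stroke, and reduces the proposition to showing $\phi$ is injective, since the inverse of a bijective CQG morphism is again a CQG morphism. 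To attack injectivity I would pass to the free complexification: let $z$ be a unitary generating a copy of $\bC[\bZ]$ and form the CQG free product $D=B*\bC[\bZ]$. A short calculation shows that $(zv_{ij})$ and its conjugate are both unitary in $M_n(D)$, so there is a CQG morphism $\theta:A\to D$ with $\theta(u_{ij})=zv_{ij}$; crucially $\theta(u_{ij}^*u_{kl})=v_{ij}z^*zv_{kl}=v_{ij}v_{kl}$, so $\theta|_{A'}$ is $\phi$ followed by the inclusion $B'\hookrightarrow D$. Hence injectivity of $\phi$ follows once $\theta$ is known to be injective.

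The hard part will be exactly this injectivity of $\theta$, which amounts to the assertion that $A_u(n)$ is the free complexification of $B_u(n)$. I would establish it with the cosemisimplicity criterion already invoked in the proof of \Cref{cor.RFD_quot_CQG}: a morphism of complex cosemisimple coalgebras is injective precisely when the induced corestriction functor is full. Since every $A$-comodule is a subquotient of sums of tensor powers of the fundamental comodule $V$ and its conjugate $\bar V$, it suffices to show that the tensor functor $\cM^A\to\cM^D$ sending $V\mapsto \chi\otimes W$ and $\bar V\mapsto\bar\chi\otimes W$ (here $W$ is the fundamental $B$-comodule and $\chi$ the generating character of $\bC[\bZ]$) matches the intertwiner spaces among all tensor words in $V,\bar V$. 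This is a diagrammatic comparison: intertwiners in $\cM^D$ are forced to preserve the $\chi$-degree, and this constraint lines them up exactly with the admissible intertwiners of $A_u(n)$ computed by Banica in \cite{MR1484551}. Carrying out this matching — or, more economically, quoting Banica's free complexification description of $A_u(n)$ directly — is the genuine content of the argument; everything preceding it is formal bookkeeping.
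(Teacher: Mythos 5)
Your proposal is correct and takes essentially the same route as the paper: the paper likewise realizes the map as the restriction of the embedding $A\to \bC[t,t^{-1}]*B$, $u_{ij}\mapsto tv_{ij}$ (your $\theta$), getting surjectivity from the generators $v_{ij}v_{kl}$ and injectivity from Banica's theorem that this free-complexification map is one-to-one (\cite[Theorem 3.4 (1)]{MR2468039}). Your concluding diagrammatic sketch is superfluous once that result is quoted, which is exactly what the paper does.
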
 
\begin{proof}
Give $C=\bC[t,t^{-1}]$ the CQG structure making $t$ a unitary grouplike element (i.e. $\Delta(t)=t\otimes t$, $\varepsilon(t)=1$ and $tt^*=1$); it is the one coming from realizing the algebra as representative functions of the circle group. Consider the coproduct CQG algebra $C*B$. 

It is easy to see that $tv_{ij}\in C*B$ satisfy the same relations as the $u_{ij}$, i.e. the matrices $(tv_{ij})_{i,j}$ and $(tv_{ji})_{i,j}$ are unitaries in $M_n(C*B)$. This means that there is a CQG algebra map $A\to C*B$ sending $u_{ij}$ to $tv_{ij}$, and it clearly restricts to $u^*_{ij}u_{kl}\mapsto v_{ij}v_{kl}$. 

So there is indeed a well defined CQG algebra map $A'\to B'$ as in the statement. One the one hand it is surjective because the generators $v_{ij}v_{kl}$ of $B'$ are in its image. On the other, it is one-to-one because $A\to C*B$ is (\cite[Theorem 3.4 (1)]{MR2468039}). This finishes the proof.  
\end{proof}

If we knew that $A$ is RFD, then so would $B'\cong A'$, since the RFD property clearly passes over to subalgebras. To get to $B$, we have to somehow lift RFD-ness from the subalgebra $B'$. We tackle this next. 

First, recall the notion of central morphism of CQG algebras from
\cite[Definition 2.1]{MR3175029} (based on the concept introduced in
the proof of \cite[Proposition 4.5]{MR2504527}); it is the natural
definition obtained by dualizing that of homomorphism from a compact
group into the center of another: 

\begin{definition}
A morphism $\pi:H\to K$ of CQG algebras is \define{central} if
\[
 \begin{tikzpicture}[anchor=base,>=stealth]
   \path +(0,0) node (1) {$H$}  +(3,.5)   node (2) {$H\otimes H$}
   +(6,0) node (3) {$H\otimes K$} +(1.5,-.5) node (4) {$H\otimes H$}
   +(4.5,-.5) node (5) {$H\otimes H$}; 
   \draw[->] (1) to [bend left=10] node[pos=.5,auto] {$\scriptstyle
     \Delta$} (2);
   \draw[->] (2) to [bend left=10] node[pos=.5,auto] {$\scriptstyle
     \id\otimes\pi$} (3);
   \draw[->] (1) to [bend right=10] node[pos=.5,auto,swap] {$\scriptstyle
     \Delta$} (4);
   \draw[->] (4) to [bend right=10] node[pos=.5,auto,swap] {$\scriptstyle\tau$} (5);
   \draw[->] (5) to [bend right=10] node[pos=.5,auto,swap]
   {$\scriptstyle\id\otimes \pi$} (3);
 \end{tikzpicture}
\]
commutes, where $\tau$ is the permutation of tensorands. 
\end{definition}

We henceforth only consider central maps which are also onto, and so
suppress the adjective `onto'. Recall also from \cite[\S 1.2]{MR3175029} that for a central map $\pi:H\to K$ of CQG algebras, one defines the third term $P\to H$ of an ``exact sequence'' $P\to H\to K$ as 
\[
	P=\{h\in H\ |\ \pi(h_1)\otimes h_2=1_K\otimes h\}. 
\]This is a symmetric definition (so the $\pi$ could have been applied
to the right hand tensorand instead), and $P$ is a Hopf subalgebra of
$H$ (called the Hopf kernel of $\pi$; it is the object denoted by
$\mathrm{HKer}(\pi)$ in \cite[Section 1]{MR1334152}). Moreover, $H\to K$ can be identified with the surjection $H\to H/HP^+$, where $P^+=\ker(\varepsilon|_P)$. In general, for a central map $H\to K$, $K$ is automatically a group algebra. 

More generally, this all goes through for maps $H\to K$ satisfying a
weaker property than centrality (cf. \cite[DEfinition
1.1.5]{MR1334152} or \cite[Section 2]{MR2504527}):

\begin{definition}\label{def.normal}
A map $\pi:H\to K$ of CQG algebras is \define{normal} if the sets 
\[
 \{h\in H\ |\ \pi(h_1)\otimes h_2=1_K\otimes h\}
\]
and 
\[
 \{h\in H\ |\ h_1\otimes\pi(h_2)=h\otimes 1_K\}
\]
are equal. 
\end{definition}

It is now easy to see that the center of the compact quantum group associated to $B$ (i.e. the largest central CQG quotient of $B$) is the map $B\to\bC[\bZ/2\bZ]$ defined by 
\begin{equation}\label{eq.centerB}
	v_{ij}\mapsto\delta_{ij}t. 
\end{equation}
Here, $t\in\bZ/2\bZ$ is the generator, and $\delta_{ij}$ is the Kronecker delta. The cocenter of $B$ is exactly $B'$. In other words: The coalgebra map \Cref{eq.centerB} induces a $\bC[\bZ/2\bZ]$-comodule structure on $B$, i.e. a $(\bZ/2\bZ)$-grading, and $B'\le B$ is the degree zero component. Hence the next result is relevant to lifting the RFD property from $B'$ to $B$.

\begin{theorem}\label{th.lift_RFD}
Let $H$ be a CQG algebra, $\Gamma$ a finite group, and $\pi:H\to \bC[\Gamma]$ a normal map as in \Cref{def.normal}. Denote by $P\le H$ the Hopf kernel of $\pi$. Then, $H$ is RFD if and only if $P$ is.  
\end{theorem}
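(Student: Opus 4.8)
The easy direction is immediate: if $H$ is RFD, then a separating family of finite-dimensional $*$-representations of $H$ restricts to one of the $*$-subalgebra $P\le H$, so $P$ is RFD as well. The content is the converse, and the plan is to manufacture enough finite-dimensional $*$-representations of $H$ by inducing them up from $P$ along the finite extension $P\to H\to\bC[\Gamma]$.

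First I would record the structural features of this extension. Since $\pi$ is normal and $\bC[\Gamma]$ is finite-dimensional and cosemisimple, the theory of normal Hopf subalgebras of cosemisimple Hopf algebras (\cite{MR1334152}) shows that $H$ is free as a left and right $P$-module of rank $|\Gamma|$, and that the $\bC[\Gamma]$-comodule structure $(\id\otimes\pi)\Delta$ makes $H$ into a strongly $\Gamma$-graded algebra $H=\bigoplus_{g\in\Gamma}H_g$ with $H_e=P$ and $H_g^*=H_{g^{-1}}$. Let $E\colon H\to P$ be the projection onto the degree-$e$ component; it is a $*$-preserving $P$-bimodule map. I would also observe here that $H$ is necessarily of Kac type: $P$ is RFD, hence Kac (\cite[Corollary A.3]{MR2210362}), and $\bC[\Gamma]$ is Kac, so the antipode square $S_H^2$---a Hopf automorphism restricting to the identity on $P$ and inducing the identity on $\bC[\Gamma]$---must be the identity. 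Consequently $E$ is the canonical trace-preserving conditional expectation: it is completely positive and faithful, and the Haar state of $H$ satisfies $\phi=\phi_P\circ E$, where $\phi_P$ is the faithful Haar state of $P$.

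Next, for each finite-dimensional $*$-representation $\rho\colon P\to B(V)$ I would form the induced module $\mathrm{Ind}\,\rho=H\otimes_P V$, of dimension $|\Gamma|\cdot\dim V<\infty$, with $H$ acting by left multiplication, and equip it with the sesquilinear form
\[
\langle a\otimes v,\ b\otimes w\rangle=\langle v,\ \rho\big(E(a^*b)\big)w\rangle.
\]
Because $E$ is a $P$-bimodule map this form is well defined on $H\otimes_P V$, complete positivity of $E$ makes it positive semidefinite, and quotienting by its radical yields a finite-dimensional Hilbert space $\cH_\rho$. A one-line computation using $E((ha)^*b)=E(a^*h^*b)$ shows $\mathrm{Ind}\,\rho(h)^*=\mathrm{Ind}\,\rho(h^*)$, so $\mathrm{Ind}\,\rho$ is a genuine finite-dimensional $*$-representation of $H$. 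To separate points, take $0\ne h\in H$; faithfulness of $\phi$ gives $\phi_P\big(E(h^*h)\big)=\phi(h^*h)>0$, so $0\ne E(h^*h)\in P$. Since $P$ is RFD there is a $*$-representation $\rho$ of $P$ with $\rho\big(E(h^*h)\big)\ne 0$, and then $\langle 1\otimes v,\ \mathrm{Ind}\,\rho(h^*h)(1\otimes v)\rangle=\langle v,\rho(E(h^*h))v\rangle\ne 0$ for suitable $v$, whence $\mathrm{Ind}\,\rho(h)\ne 0$. Thus $H$ is RFD.

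The main obstacle I anticipate is not the separation step, which is short once the setup is in place, but rather assembling the analytic properties of the conditional expectation $E$: verifying that $H$ is of Kac type and that $E$ is a faithful, completely positive $P$-bimodule conditional expectation with $\phi=\phi_P\circ E$. These are precisely the inputs that make the induced form positive definite and the induced representations honest $*$-representations on Hilbert spaces, and they are where all three hypotheses---normality of $\pi$, finiteness of $\Gamma$, and RFD-ness of $P$---genuinely enter. An alternative route would recast this in the comodule language of \Cref{cor.RFD_quot_CQG}, taking $\pi\colon H\to\bC[\Gamma]$ together with the finite-dimensional quotient CQG algebras of $P$ (suitably $\Gamma$-stabilized and extended across the crossed product) as the jointly full family; but the induction argument above seems to me the most transparent.
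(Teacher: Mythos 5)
Your core construction is the same as the paper's: both proofs induce a finite\-/dimensional $*$-representation $\rho$ of $P$ up to $H$ by forming $H\otimes_PV$ and equipping it with the Rieffel-type form $\langle a\otimes v,b\otimes w\rangle=\langle v,\rho(E(a^*b))w\rangle$ attached to the canonical expectation $E\colon H\to P$. The differences lie in the supporting steps, and they cut both ways. Your endgame is genuinely leaner: you separate points of $H$ directly from faithfulness of the Haar state together with $\phi_H=\phi_P\circ E$, whereas the paper uses the induced modules only to show that $P\to\ol{H}$ is injective and must then invoke the maximality theorem \cite[Theorem 3.4]{MR3011789} to conclude $H\cong\ol{H}$; granted the construction, your finish removes that external input. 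On the other hand, your finite-dimensionality claim is too strong: $H$ is in general \emph{not} free of rank $|\Gamma|$ over $P$. Already for $H=C(SU(2))$, $P=C(SO(3))$, $\Gamma=\bZ/2\bZ$ (the central quotient $w_{ij}\mapsto\delta_{ij}t$), the odd component is the section module of a nontrivial line bundle on $SO(3)$, hence invertible but not free. This error is harmless: the strong grading (write $1=\sum_kc_kd_k$ with $c_k\in H_g$, $d_k\in H_{g^{-1}}$) exhibits each $H_g$ as a direct summand of a finitely generated free right $P$-module, and finitely generated projectivity already forces $\dim(H\otimes_PV)<\infty$. The paper instead gets finiteness from Takeuchi descent \cite{MR549940} plus monoidal rigidity; your (corrected) route is more elementary.

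The genuine gap is positive semidefiniteness of the induced form, which you derive from ``$H$ is Kac, so $E$ is the trace-preserving conditional expectation, hence completely positive.'' Neither half of this works as written. The inference that $S_H^2=\id$ because it fixes $P$ and induces the identity on $\bC[\Gamma]$ is invalid as a pattern of reasoning (compare $x\mapsto-x$ on $\bZ/4\bZ$, which fixes $2\bZ/4\bZ$ pointwise and induces the identity on the quotient); $H$ \emph{is} Kac here, but the proof needs the Woronowicz character: it is trivial on $P$, hence factors through $\bC[\Gamma]$, and positivity of the $F$-matrices then forces it to be $\varepsilon$. More seriously, ``completely positive'' is meaningless without specifying a positive cone, and trace-preservation buys positivity of $E$ only relative to completions of $H$ (or to the GNS/reduced completion of $P$). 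What your form actually requires is that $[\rho(E(a_i^*a_j))]_{ij}$ be positive semidefinite for an \emph{arbitrary} finite-dimensional $*$-representation $\rho$ of $P$, and such representations need not be continuous for those norms: $\bC[F_2]$ is RFD while $C^*_r(F_2)$ is simple and admits no nonzero finite-dimensional representations, and positivity of a self-adjoint element genuinely depends on the completion (e.g. $2\sqrt{3}+g+g^{-1}+h+h^{-1}$ is positive in $C^*_r(F_2)$ by Kesten's theorem but is sent to $2\sqrt{3}-4<0$ by the one-dimensional representation $g,h\mapsto-1$). Via cleftness this issue condenses into a single question---why is $\rho(b^*b)\geq 0$ for $b\in H_g$, when $b\notin P$ and $\rho$ does not see the factorization $b^*b$?---and it is the quantum analogue of the classical lemma that extending a positive-definite function by zero off a subgroup preserves positive-definiteness. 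In fairness, the paper delegates exactly this point to Rieffel's induction procedure \cite{MR0353003} without verifying its positivity hypothesis, so you are working at a comparable level of rigor; but since you replace the citation by an argument, the argument has to be right, and the Kac/trace reasoning does not establish the statement that carries the real weight of the theorem.
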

\begin{proof}
As noted before, the RFD property for the large algebra $H$ implies it for the smaller algebra $P$, so it is the other implication that will be more interesting. We henceforth assume $P$ to be RFD. 

First, since $\Gamma$ is finite, the quotient $H\to\bC[\Gamma]$ factors through a normal map $\ol{H}\to\bC[\Gamma]$, where $H\to\ol{H}$ is the RFD quotient. Suppose we show that the composition $P\to H\to\ol{H}$ is one-to-one. We then get the diagram
\[
 \begin{tikzpicture}[anchor=base,>=stealth]
   \path +(0,-1) node (1) {$P$}  +(4,-1)   node (3) {$\bC[\Gamma]$,} +(2,-1.5) node (4) {$\ol{H}$}; 
   \draw[->] (1) to [bend right=10] (4);
   \node (2) at (2,-.5) {$H$}
   	edge[->,bend left=10] node[pos=.5,auto] {$\scriptstyle \pi$} (3);
   \draw[->] (2) -- (4);
   \draw[->] (1) to [bend left=10] (2);
   \draw[->] (4) to [bend right=10] (3);
 \end{tikzpicture}
\]
where the two horizontal rows are both exact in the sense that for both $H$ and $\ol{H}$ the CQG subalgebra $P$ is the Hopf kernel of the surjection onto $\bC[\Gamma]$. It now follows from \cite[Theorem 3.4]{MR3011789} that the vertical arrow is an isomorphism.   

It remains to prove that the map $P\to H\to\ol{H}$ is indeed injective. Since we are assuming that $P$ is RFD, this means showing that every finite-dimensional $*$-representation of $P$ on a Hilbert space embeds in (the restriction to $P$) of one of $H$. So let $M$ be a finite-dimensional Hilbert endowed with a $*$-action by $P$, and consider the $H$-module $H\otimes_PM$. The plan is to show that it is finite-dimensional and that it has a Hilbert space structure respecting the $*$-structure of $H$. We do these two things in reverse order. 

First, since $P\le H$ is an inclusion of cosemisimple Hopf algebras, it has a canonical retraction $H\to P$. Indeed, writing $H$ as a direct sum of matrix coalgebras corresponding to the simple $H$-comodules, $P$ is a direct sum of some of those coalgebras. This realizes $P$ as a direct summand of $H$, and the map $E$ is the projection induced by this direct sum decomposition. The so-called \define{expectation} $E$ intertwines the $*$ operations of $H$ and $P$, and is also a $P$-bimodule map. There is a general procedure of putting a pre-inner product on $H\otimes_PM$ for any Hilbert space $P$-representation $M$ in the presence of such an expectation: 
\[
	\langle h\otimes m,k\otimes n\rangle \stackrel{\text{def}}{=} E(h^*k)\langle m,n\rangle,\quad \forall h,k\in H,\quad \forall m,n\in M,
\]
where the right hand $\langle\cdot,\cdot\rangle$ is the inner product on $M$, assumed linear in the second variable (see e.g. \cite[Definition 1.3, Lemma 1.7]{MR0353003}). It is an easy check now that this plays well with respect to the $*$-structure of $H$, in the sense that 
\[
	\langle v,hw\rangle = \langle h^*v,w\rangle,\quad \forall h\in H,\quad \forall v,w\in H\otimes_PM. 
\]
Note further that the canonical $P$-module map $M\to H\otimes_PM$ sending $m$ to $1_H\otimes m$ is one-to-one since it has $E\otimes\id_M$ as a retraction. 

Finally, we need to show that $H\otimes_PM$ is finite-dimensional. Note that $H\otimes_PM$ is not only an $H$-module, but a $\Gamma$-graded one: The surjection $\pi:H\to\bC[\Gamma]$ induces a $\Gamma$-grading on $H$ such that $P$ is precisely its homogeneous component of degree $1_\Gamma\in\Gamma$ (because $P$ is the Hopf kernel of $\pi$).

Since $\pi$ is a surjection of cosemisimple coalgebras, it makes $H$ into a faithfully coflat comodule over $\bC[\Gamma]$. In the presence of this technical condition, a standard descent result (\cite[Theorem 2]{MR549940}) implies that $H\otimes_P$ implements an equivalence between the category $_P\cM$ of $P$-modules and that of $\Gamma$-graded $H$-modules, denoted by $_H^\Gamma\cM$. The inverse functor is $V\mapsto V_1$, where $V=\bigoplus_{\gamma\in\Gamma}V_\gamma$ is the grading of $V\in{_H^\Gamma}\cM$. 

The group $\Gamma$ acts on the category $_P\cM\simeq {_H^\Gamma}\cM$ by degree shift, with the autoequivalence implemented by $\eta\in\Gamma$ being defined by 
\[
	V\mapsto V^\eta,\ (V^\eta)_\gamma = V_{\gamma\eta},\quad\forall\ V\in{_H^\Gamma}\cM,\quad \forall \gamma\in\Gamma.
\]

The monoidal structure of $_P\cM$ can be transported via $H\otimes_P$ over to $_H^\Gamma\cM$: For $V,W\in{_H^\Gamma}\cM$ and $\gamma\in G$ we have $(V\otimes W)_\gamma = V_\gamma\otimes W_\gamma$. This description makes it clear that the action of $\Gamma$ on $_P\cM\simeq{_H^\Gamma}\cM$ from the previous paragraph is by monoidal autoequivalences. 

Since the finite-dimensional $P$-modules are exactly those that are rigid with respect to the monoidal structure in $_P\cM$, finite-dimensionality is preserved by any monoidal autoequivalence. In particular, the image $(H\otimes_PM)_\gamma\in{_P}\cM$ of $M\in{_P}\cM$ through the monoidal autoequivalence $\gamma\in\Gamma$ is finite-dimensional. But then the homogeneous components of $H\otimes_PM$ are finite-dimensional, and there are only finitely many components because $\Gamma$ is finite.  
\end{proof}

\begin{remark}
\Cref{th.lift_RFD} is very similar in spirit to \cite[Theorems 4.1 and 5.7]{MR2681256}. 
\end{remark}

We now have

\begin{proposition}\label{pr.A<->B}
For any $n\ge 2$, $A_u(n)$ is RFD if and only if $B_u(n)$ is.   
\end{proposition}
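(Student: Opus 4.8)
Since the statement is an equivalence, I would establish the two implications separately, and the key realization is that they call for different tools: the finite-group lifting result \Cref{th.lift_RFD} for one direction, and the free-product result \Cref{pr.RFD_free_prod} for the other. Throughout I use the elementary fact that RFD-ness passes to sub-$*$-algebras, since a separating family of finite-dimensional $*$-representations of an algebra restricts to one on any sub-$*$-algebra.

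Assume first that $A = A_u(n)$ is RFD. The plan is to pass down to the common subalgebra $A' \cong B'$ and then climb back up into $B = B_u(n)$. Explicitly, $A' \le A$ is a sub-$*$-algebra, hence RFD, and \Cref{pr.free_vs_ortho} carries this over to $B' \cong A'$. To recover $B$ from $B'$ I would invoke \Cref{th.lift_RFD}: the central---hence normal (cf. \Cref{def.normal})---quotient $\pi \colon B \to \bC[\bZ/2\bZ]$ of \Cref{eq.centerB} has Hopf kernel equal to its degree-zero homogeneous component, which is precisely $B'$, as recorded just before \Cref{th.lift_RFD}. Applying \Cref{th.lift_RFD} with $H = B$, $\Gamma = \bZ/2\bZ$ and $P = B'$ then promotes RFD-ness of $B'$ to RFD-ness of $B$.

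For the converse, assume $B$ is RFD. Here \Cref{th.lift_RFD} is unavailable, because the grading of $A$ with degree-zero part $A'$ arises from a quotient onto $\bC[\bZ]$ and $\bZ$ is infinite. Instead I would reuse the embedding built inside the proof of \Cref{pr.free_vs_ortho}: the injective CQG map $A \to C * B$, $u_{ij} \mapsto t v_{ij}$, where $C = \bC[t,t^{-1}] = \bC[\bZ]$ and injectivity is \cite[Theorem 3.4 (1)]{MR2468039}. The algebra $C$ is commutative and embeds in $C(\bT)$, so it is RFD by \Cref{rem.comm_RFD}; then \Cref{pr.RFD_free_prod} makes the coproduct $C * B$ RFD. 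Since $A$ sits inside $C * B$ as a sub-$*$-algebra, it too is RFD.

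I expect no direction to require serious computation once the machinery is cited; the only real subtlety---and the thing I would guard against---is the asymmetry in how the two algebras relate. Because $A$ is essentially a free complexification of $B$, it embeds in a free product assembled from $B$ and a copy of $\bC[\bZ]$, which feeds the converse into \Cref{pr.RFD_free_prod}; no analogous embedding runs the other way, so the forward passage from $B'$ up to $B$ must instead route through the finite cocenter $\bZ/2\bZ$ and \Cref{th.lift_RFD}. Keeping straight which tool governs which implication is the main thing to get right.
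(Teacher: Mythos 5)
Your proposal is correct and follows essentially the same route as the paper: the forward implication passes RFD-ness down to $A'\cong B'$ and lifts it to $B$ via the normal quotient $B\to\bC[\bZ/2\bZ]$ and \Cref{th.lift_RFD}, while the converse uses the embedding $A\hookrightarrow \bC[t,t^{-1}]*B$ together with \Cref{rem.comm_RFD} and \Cref{pr.RFD_free_prod}. The paper's proof is just a terser statement of exactly these two arguments, deferring to the discussion preceding the proposition for the details you spell out.
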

\begin{proof}
The preceding discussion, via \Cref{th.lift_RFD}, builds up to the implication ($A$ is RFD $\Rightarrow$ $B$ is RFD). Conversely, we noted in the proof of \Cref{pr.free_vs_ortho} that $A$ embeds in $\bC[t,t^{-1}]*B$, which proves the opposite implication using \Cref{pr.RFD_free_prod} again. 
\end{proof}

We can now focus on the algebras $A_u(n)$, whose RFD-ness takes up the rest of this section. The proof is by induction on $n$, by passing from $n=2$ to $n=4$ and then from any $n\ge 3$ to $n+1$. It seems somewhat more subtle to pass from $2$ to $3$, which is why $n=3$ is missing in \Cref{th.main}.

\begin{lemma}\label{le.n=2}
The CQG algebra $A_u(2)$ is RFD. 
\end{lemma}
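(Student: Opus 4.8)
The plan is to reduce to the free orthogonal case, where the algebra turns out to be only mildly noncommutative. By \Cref{pr.A<->B} it suffices to prove that $B=B_u(2)=A_o(I_2)$ is RFD. The key observation is that $B$ is a $q=-1$ quantization of $SU(2)$: after a standard change of generators $B$ is generated by two elements $\alpha,\gamma$ subject to the normality of $\gamma$, the anticommutation relations $\alpha\gamma=-\gamma\alpha$ and $\alpha\gamma^*=-\gamma^*\alpha$, and $\alpha^*\alpha+\gamma^*\gamma=\alpha\alpha^*+\gamma\gamma^*=1$ (so that $\alpha$ is normal as well). One can also argue directly in terms of the self-adjoint orthogonal matrix $(v_{ij})$ of \Cref{ex.ortho}, but the presentation above makes the relevant structure transparent.

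The structural point I would exploit is that, unlike $SU_q(2)$ for $q\ne\pm1$, this algebra is module-finite over a commutative $*$-subalgebra. Grade $B$ by $\bZ/2\times\bZ/2$, placing $\alpha,\alpha^*$ in degree $(1,0)$ and $\gamma,\gamma^*$ in degree $(0,1)$; the (anti)commutation relations show that the degree-$(0,0)$ subalgebra $B_0$ is commutative and that $B$ is spanned over $B_0$ by $1,\alpha,\gamma,\alpha\gamma$. Hence $B$, and therefore its enveloping $C^*$-algebra, is subhomogeneous with all irreducible $*$-representations of dimension at most two. In fact a separating family is visible by hand: for $r\in[0,1]$ and $\phi,\psi\in\bR$ the assignment $\gamma\mapsto r\,e^{i\psi}\sigma_3$, $\alpha\mapsto\sqrt{1-r^2}\,e^{i\phi}\sigma_1$ (with $\sigma_1,\sigma_3$ the Pauli matrices) defines a two-dimensional $*$-representation, and these together with the one-dimensional characters detect every element. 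Since a subhomogeneous $C^*$-algebra is RFD and $B$ embeds densely in its (subhomogeneous) $C^*$-completion, restricting finite-dimensional $*$-representations shows $B$ itself is RFD; feeding this back through \Cref{pr.A<->B} yields the RFD-ness of $A_u(2)$.

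The main obstacle is precisely the structural claim that $B$ is finite as a module over the commutative subalgebra $B_0$, equivalently that the small representations above genuinely separate. Verifying commutativity of $B_0$ and the rank-$4$ spanning statement is a finite manipulation of the relations, but it is the crux of the argument, and one must also confirm that $B$ does not collapse inside its $C^*$-completion. This is exactly where the hypothesis $Q=I_2$ — i.e. the involutivity of the antipode, or the $q=-1$ point — is essential: for $q\ne\pm1$ the $C^*$-algebra $C(SU_q(2))$ contains a copy of the compact operators and admits no separating family of finite-dimensional representations at all, in line with the Kac-type obstruction to RFD-ness recorded just before \Cref{se.main}.
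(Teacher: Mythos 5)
Your route is genuinely different from the paper's, and it is not circular: \Cref{pr.A<->B} is proved before and independently of this lemma, so you may use it to trade $A_u(2)$ for $B=B_u(2)$. (The paper instead embeds $A_u(2)$ into $\bC[t,t^{-1}]*C(SU(2))$ via \cite[Lemme 7]{MR1484551} and concludes from \Cref{rem.comm_RFD} and \Cref{pr.RFD_free_prod}; you trade the free-product machinery for concrete representation theory of the $q=-1$ deformation.) Several of your ingredients are sound: the identification $B_u(2)\cong C(SU_{-1}(2))$ is a correct and standard change of generators, your Pauli-matrix assignments are bona fide $*$-representations, and commutativity of $B_0$ does follow from the relations (normality of $\alpha$ and $\gamma$ together with the four anticommutation relations). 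But the two steps you yourself identify as the crux are genuinely gapped, and one of them is false as stated.

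First, $B$ is \emph{not} spanned over $B_0$ by $1,\alpha,\gamma,\alpha\gamma$: comparing $\bZ/2\times\bZ/2$-degrees, that claim would force $\alpha^*=b\alpha$ for some $b\in B_0$. Evaluating in your own representation with parameters $(r,\phi,\psi)$, $0<r<1$, where $\pi(\alpha)=\sqrt{1-r^2}\,e^{i\phi}\sigma_1$ is invertible, this would give $\pi(b)=e^{-2i\phi}$ identically; but as $r\to 1$ every monomial of $b$ involving $\alpha$ or $\alpha^*$ tends to $0$, so $\pi(b)$ tends to a limit independent of $\phi$ --- a contradiction. (The correct statement uses the nine spanning elements $1,\alpha,\alpha^*,\gamma,\gamma^*,\alpha\gamma,\alpha\gamma^*,\alpha^*\gamma,\alpha^*\gamma^*$.) Second, and more seriously, the inference ``module-finite over a commutative $*$-subalgebra $\Rightarrow$ all irreducible $*$-representations have dimension at most two'' is unsupported: $B_0$ is \emph{not central}, so Schur's lemma says nothing about how it acts in an irreducible representation, and a commutative algebra of operators on an infinite-dimensional Hilbert space need not have a common eigenvector; no dimension bound follows from counting module generators. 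The repair is an observation you never make: the elements $\alpha^*\alpha$ and $\gamma^*\gamma$ are \emph{central} in $B$ (a one-line check from normality and anticommutation). Hence in any irreducible $*$-representation they act as scalars $\lambda^2,\mu^2$ with $\lambda^2+\mu^2=1$; if $\lambda\mu=0$ one gets your one-dimensional characters, and otherwise polar decomposition produces two anticommuting unitaries with central squares, i.e.\ an irreducible representation of the twisted group algebra of $\bZ/2\times\bZ/2$, which is $M_2(\bC)$. This shows at once that every irreducible $*$-representation of the universal $C^*$-completion of $B$ has dimension at most $2$ and lies (up to equivalence) in your explicit family --- and since irreducible representations separate any $C^*$-algebra, this is also the missing proof of your separation claim. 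Finally, the non-collapsing issue you flag is settled by the standard fact that the Haar state of a CQG algebra is faithful and its GNS representation factors through the universal $C^*$-completion, so $B$ embeds there. With these repairs your argument does prove \Cref{le.n=2}.
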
 
\begin{proof}
Consider the algbra $C(SU(2))$ of representative functions on $SU(2)$, i.e. the linear spans of matrix coefficients of finite-dimensional $SU(2)$-representations. It is generated as a $*$-algebra (in fact as an algebra) by the coefficients $w_{ij}$, $1\le i,j\le 2$ of the $2$-dimensinal vector representation. 

Just as in the proof of \Cref{pr.free_vs_ortho}, we have a CQG algebra morphism $A_u(2)\to \bC[t,t^{-1}]*C(SU(2))$ defined by $u_{ij}\mapsto tw_{ij}$. It follows from \cite[Lemme 7]{MR1484551} that this map is one-to-one, and hence it suffices to show that $\bC[t,t^{-1}]*C(SU(2))$. Since $\bC[t,t^{-1}]$ and $C(SU(2))$ are commutative CQG algebras, they are both RFD by \Cref{rem.comm_RFD}. But then the conclusion follows from \Cref{pr.RFD_free_prod}. 
\end{proof}

The inductive step in the proof of \Cref{th.main} splits in two, as indicated above we first pass from $n=2$ to $n=4$, and then from $n-1$ to $n$ for $n\ge 5$. In both proofs we make use of \Cref{cor.RFD_quot_CQG}.  This latter result says that it suffices to find CQG algebra morphisms from $A=A_u(n)$ into some RFD CQG algebras $A_i$ such that the induced functors $\cM^A\to\cM^{A_i}$ form a jointly full family. In fact, we will use only two $A_i$'s, which we now proceed to describe. 

In both proofs, one RFD quotient of $A$ is the abelianization $A\to C(U(n))$ obtained by imposing the commutativity between the generators $u_{ij}$ of $A$. The resulting quotient is, just as the notation suggests, the algebra of representative functions on the $n\times n$ unitary group; the images $w_{ij}$ of $u_{ij}$ are the coefficients of the standard $n$-dimensional representation of $U(n)$.

For passing from $n=2$ to $n=4$, the other RFD quotient of $A$ that we consider mods out the Hopf ideal generated by $u_{ij}$ for $i=1,2$ and $j=3,4$ or $i=3,4$ and $j=1,2$. In other words, we break up the $4\times 4$ matrix into $2\times 2$ blocks along the diagonal, and kill off the remaining $u$'s. The quotient is $D=A_u(2)*A_u(2)$.

Similarly, for $n\ge 5$, the second quotient of $A$ that we consider is the one by the Hopf ideal generated by $u_{in}$ and $u_{nj}$ for $1\le i,j\le n-1$. This time around we break up the $n\times n$ matrix into an $(n-1)\times (n-1)$ upper left hand block $(u_{ij})$, $1\le i,j\le n-1$ and a lower right hand corner $u_{nn}$, and we annihilate the other $u$'s. Denoting $C=\bC[t,t^{-1}]$, the resulting quotient is nothing but $C*A_u(n-1)$, with the $u_{ij}\in A$, $1\le i,j\le n-1$ mapping onto the corresponding generators $x_{ij}$ of $A_u(n-1)$, and $u_{nn}$ being sent to $t\in C$. We denote this quotient again by $D$, so that the same letter denotes two different algebras, depending on context. 

Denote by $V$ the $n$-dimensional $A$-comodule whose corresponding matrix coalgebra is spanned by $u_{ij}$. As in \Cref{ex.free}, we denote by $(e_i)_{i=1}^n$ a basis such that the comodule structure on $V$ is $e_j\mapsto \sum_i e_i\otimes u_{ij}$. The inner product making the $e_i$'s orthonormal is compatible with this comodule structure in the appropriate sense, and it is the one we use whenever thinking of $V$ as a Hilbert space. We denote the dual basis in $V^*$ by $(f_i)_{i=1}^n$. 

Joint fullness of the two functors out of $\cM^A$ requires that for any two finite-dimensional $A$-comodules $W,W'$, any map $W\to W'$ compatible both with the $U(n)$-representation structures and the $D$-comodule structures is automatically an $A$-comodule morphism. Identifying $\mathrm{Hom}(W,W')$ with $W'\otimes W^*$, this means showing that all elements of $W'\otimes W^*$ fixed by both $U(n)$ and $D$ are fixed by $A$ (an element $w$ of a comodule over a Hopf algebra is fixed or invariant if the comodule structure map acts on it by $w\mapsto w\otimes 1$).  

Because $A$ is generated by $u_{ij}$ and $u_{ij}^*$ as an algebra, every $A$-comodule is a subcomodule of a finite direct sum of tensor products $V^{(\varepsilon_i)}=V^{\varepsilon_1}\otimes\cdots\otimes V^{\varepsilon_\ell}$, where $\varepsilon_i$ are either blank or `$*$'. Hence, it suffices to substitute such tensor products $V^{(\varepsilon_i)}$ for $W'\otimes W^*$ in the previous paragraph, and show that elements of $V^{(\varepsilon_i)}$ fixed by $U(n)$ and $D$ are fixed by $A$. 

Let us now recall some facts about the category $\cM^A$, to get a better grasp of what the above-stated goal entails. According to \cite[Th\'eor\`eme 1]{MR1484551} (and as recounted in \Cref{ex.free}), the simples are indexed by words in $V$ and $V^*$. If $a_x$ denotes the irreducible corresponding to the word $x$, then the decomposition of tensor products in $\cM^A$ is given by 
\begin{equation}\label{eq.free_bis}
	a_xa_y = \sum_{x=vg,y=g^*w} a_{vw}.
\end{equation} 
Here, recall that expressions such as $vg$ stand for concatenation of words $v$ and $g$, and $g\mapsto g^*$ is the anti-multiplicative involution on the free semiring on $V$ and $V^*$ that interchanges $V$ and $V^*$. 

Unpacking this at the level of comodules, we see that homomorphisms from a tensor product $V^{(\varepsilon_i)}$ to the trivial comodule are of the following form: One considers all ways of pairing off a $V$ with a $V^*$ among the $V^{\varepsilon_i}$'s in such a way that non-intersecting segments can be drawn to connect the pairs. The space $^A\mathrm{Hom}(V^{(\varepsilon_i)},\bC)$ of coinvariants is the span of such pairings, which we refer to as \define{non-crossing}. The invariants $^A\mathrm{Hom}(\bC,V^{(\varepsilon_i)})$ can be identified with the coinvariants, since all comodules in sight have we have compatible inner products; for this reason, we do not distinguish between invariants and coinvariants, which might makes for a certain amount of hopefully non-confusing sloppiness in the language. 

Here are some examples of coinvariants in $\cM^A$, with white and black dots standing for $V$ and $V^*$ respectively. The left hand side depicts $(V\otimes V^*)^{\otimes 3}$, while the right hand side is $(V^*)^{\otimes 2}\otimes V^{\otimes 2}$.

\begin{equation}\label{eq.A_coinv}
	\begin{tikzpicture}[baseline=(current  bounding  box.center),
			wh/.style={circle,draw=black,thick,inner sep=2mm},
			bl/.style={circle,draw=black,fill=black,thick,inner sep=2mm}]
		\node (1) at (0,0) [wh] {};
		\node (2) at (1,0) [bl] {};
		\node (3) at (2,0) [wh] {};
		\node (4) at (3,0) [bl] {};
		\node (5) at (4,0) [wh] {};
		\node (6) at (5,0) [bl] {};
		\draw (2) to [bend right] (3);
		\draw (1) to [bend right] (4);
		\draw (5) to [bend right] (6);
		\node (1') at (7,0) [bl] {};
		\node (2') at (8,0) [bl] {};
		\node (3') at (9,0) [wh] {};
		\node (4') at (10,0) [wh] {};
		\draw (1') to [bend right] (4');
		\draw (2') to [bend right] (3'); 	
	\end{tikzpicture}
\end{equation}
By comparison, Schur-Weyl duality says that the coinvariants for the $U(n)$-action on $V^{(\varepsilon_i)}$ are the span of \define{all} pairings between a $V$ and a $V^*$. Some examples for the same two comodules as in the previous picture: 
\begin{equation}\label{eq.U_coinv}
	\begin{tikzpicture}[baseline=(current  bounding  box.center),
			wh/.style={circle,draw=black,thick,inner sep=2mm},
			bl/.style={circle,draw=black,fill=black,thick,inner sep=2mm}]
		\node (1) at (0,0) [wh] {};
		\node (2) at (1,0) [bl] {};
		\node (3) at (2,0) [wh] {};
		\node (4) at (3,0) [bl] {};
		\node (5) at (4,0) [wh] {};
		\node (6) at (5,0) [bl] {};
		\draw (2) to [bend right] (4);
		\draw (1) to [bend right] (6);
		\draw (3) to [bend right] (5);
		\node (1') at (7,0) [bl] {};
		\node (2') at (8,0) [bl] {};
		\node (3') at (9,0) [wh] {};
		\node (4') at (10,0) [wh] {};
		\draw (1') to [bend right] (3');
		\draw (2') to [bend right] (4'); 	
	\end{tikzpicture}
\end{equation}

When $n=2$ and hence $D$ is $A_u(2)*A_u(2)$, the comodule $V$ breaks up as the direct sum between the span $W$ of $e_1,e_2$ and the span $U$ of $e_3,e_4$. The dual $V^*$ breaks up accordingly as $W^*\oplus U^*$. The typical tensor product $V^{(\varepsilon_i)}$, $1\le i\le\ell$ decomposes into $2^\ell$ summands, according to whether we choose $W^{\varepsilon_i}$ or $U^{\varepsilon_i}$ for each $i$. 

To get coinvariants we have to pair $W$ tensorands with $W^*$'s and $U$'s with $U^*$'s, but just as for $A$, in a non-crossing manner. This follows from the description of coinvariants for $A_u(2)$ and the fact that the simple comodules for a coproduct $H_1*H_2$ of CQG algebras are precisely the tensor products $V_1\otimes V_2\otimes\cdots\otimes V_\ell$, where $V_i$ are simple comodules over $H_1$ or $H_2$ in an alternating fashion and no $V_i$'s are trivial (see e.g. \cite[Theorem 3.10]{MR1316765}). 

The same discussion applies for $n\ge 5$ with $W$ being the span of $e_i$, $1\le i\le n-1$ and $U$ being the one-dimensional vector space spanned by $e_n$. Once more, we use the same symbols and rely on context to differentiate between the two situations described in this paragraph and the previous one. 
 
In the pictures below, small white (black) circles represent $U$'s (respectively $U^*$'s), and similarly, the large circles are $W$'s. The left hand side is a coinvariant in the summand $W\otimes U^*\otimes W\otimes W^*\otimes U\otimes W^*$ of $(V\otimes V^*)^{\otimes 3}$. The upper right hand side question mark indicates that there are no non-zero coinvariants for the $D$-comodule $U^*\otimes W^*\otimes U\otimes W$, because the only way of pairing $U$ to $U^*$ and $W$ to $W^*$ is not non-crossing. On the other hand, $W^*\otimes U^*\otimes U\otimes W\cong (U\otimes W)^*\otimes(U\otimes W)$ does have the obvious coinvariant pairing off $U\otimes W$ with its dual. 
\begin{equation}\label{eq.D_coinv}
	\begin{tikzpicture}[baseline=(current  bounding  box.center),
			wh/.style={circle,draw=black,thick,inner sep=2mm},
			bl/.style={circle,draw=black,fill=black,thick,inner sep=2mm},
			whs/.style={circle,draw=black,thick},
			bls/.style={circle,draw=black,fill=black,thick}]
		\node (1) at (0,-1) [wh] {};
		\node (2) at (1,-1) [bls] {};
		\node (3) at (2,-1) [wh] {};
		\node (4) at (3,-1) [bl] {};
		\node (5) at (4,-1) [whs] {};
		\node (6) at (5,-1) [bl] {};		\draw (1) to [bend right] (6);
		\draw (2) to [bend left] (5);
		\draw (3) to [bend right] (4);
		\node (1') at (7,0) [bls] {};
		\node (2') at (8,0) [bl] {};
		\node (3') at (9,0) [whs] {};
		\node (4') at (10,0) [wh] {};
		\node (5') at (8.5,.5) {$?$};
		\node (1'') at (7,-2)  [bl]   {};
		\node (2'') at (8,-2)	 [bls]  {};
		\node (3'') at (9,-2)	 [whs]  {};
		\node (4'') at (10,-2) [wh]   {};
		\draw (1'') to [bend right] (4'');
		\draw (2'') to [bend left] (3''); 
	\end{tikzpicture}\end{equation}
Our goal of showing joint fullness now consists of proving that for any $V^{(\varepsilon_i)}$, a coinvariant that is both in the span of pictures \Cref{eq.U_coinv} and that of \Cref{eq.D_coinv} must necessarily be in the span of \Cref{eq.A_coinv}.

\begin{remark}\label{rem.lin_ind}
It will be important in the sequel to note that although in general the $U(n)$-pairings are not linearly independent, the non-crossing pairings are whenever $n\ge 2$. So the non-crossing pairing pictures form a basis for the space of coinvariants of any $A$-comodule $V^{(\varepsilon_i)}$. 
\end{remark}

Note that if $V^{(\varepsilon_i)}$ for is to have any non-zero $U(n)$-coinvariants at all for a sequence $(\varepsilon_1,\ldots,\varepsilon_\ell)$ of blanks and $*$'s, then $\ell$ must be even ($2k$, say), and there must be an equal number of blanks and $*$'s (i.e. an equal number of $V$'s and $V^*$'s). 

Now, since $V^{(\varepsilon_i)}$ is isomorphic to $V^{\otimes k}\otimes(V^*)^{\otimes k}$ as a vector space, coinvariants and invariants can be identified with endomorphisms of $V^{\otimes k}$. Since we only work with one $V^{(\varepsilon_i)}$ at a time, we can fix this identification once and for all once we decide in which order to pull the $V^*$'s out to the left as $V$'s. We will assume such an identification has been made whenever convenient, and we freely change points of view to talk about the tensors we are manipulating as either elements of $V^{(\varepsilon_i)}$, or homomorphisms from it to $\bC$, or finally, endomorphisms of $V^{\otimes k}$. We often refer to $U(n)$-coinvariants as permutations, for instance, because every picture \Cref{eq.U_coinv} corresponds to a permutation of the $k$ tensorands once this identification has been made. Correspondingly, we may refer to pictures \Cref{eq.A_coinv} as \define{non-crossing permutations}. 
 
With all of this in place, we can forge on towards the proof of the main theorem.

\begin{lemma}\label{le.2->4}
Let $n=4$, so that $D=A_u(2)*A_u(2)$. For any choice of symbols $(\varepsilon_1,\ldots,\varepsilon_{2k})$ consisting of $k$ blanks and $k$ $*$'s, a map $V^{(\varepsilon_i)}\to\bC$ that is both a $U(n)$-coinvariant and a $D$-coinvariant is in the span of the non-crossing pairings \Cref{eq.A_coinv}. 
\end{lemma}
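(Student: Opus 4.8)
The plan is to compare the three coinvariant spaces through the decomposition $V=W\oplus U$ into the two coordinate blocks (with $W=\langle e_1,e_2\rangle$, $U=\langle e_3,e_4\rangle$), and to show that the extra constraint of $U(4)$-invariance forces a $D$-coinvariant to treat the two blocks symmetrically, which is exactly what it means to be non-crossing in the uncolored sense.

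First I would record a convenient basis for the $D$-coinvariants. By the description recalled around \Cref{eq.D_coinv}, a $D$-coinvariant of $V^{(\varepsilon_i)}$ is a linear combination of colored non-crossing diagrams $T_{\sigma,\tau}$, where $\sigma$ is a non-crossing pairing of the $2k$ dots and $\tau$ assigns to each arc a color in $\{W,U\}$ (pairing $W$ with $W^*$ or $U$ with $U^*$). Since $\dim W=\dim U=2$, the tensors occurring for a fixed coloring are linearly independent by \Cref{rem.lin_ind} applied to the two copies of $A_u(2)$, and distinct colorings live in orthogonal summands of $V^{(\varepsilon_i)}$; so the $T_{\sigma,\tau}$ form a basis for the space of $D$-coinvariants. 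In this basis the uncolored non-crossing pairing $T_\sigma$ of \Cref{eq.A_coinv} is exactly $\sum_\tau T_{\sigma,\tau}$, because the full cap $\sum_{i=1}^4 e_i\otimes f_i$ on each arc splits as the $W$-cap plus the $U$-cap. Thus $A$-coinvariance amounts to color-independence of the expansion coefficients, and the lemma reduces to the following claim: if $T=\sum_{\sigma,\tau}d_{\sigma,\tau}T_{\sigma,\tau}$ is a $D$-coinvariant that is in addition $U(4)$-invariant, then $d_{\sigma,\tau}$ does not depend on $\tau$.

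To prove this I would pass from $U(4)$ to its complexified Lie algebra $\mathfrak{gl}_4$, so that $U(4)$-invariance of $T$ becomes annihilation by every matrix unit $E_{ab}$. The decisive computation is local: since $1\in W$ and $3\in U$,
\[
  E_{13}\cdot\Big(\sum_{i\in\{1,2\}}e_i\otimes f_i\Big)=-\,e_1\otimes f_3,\qquad
  E_{13}\cdot\Big(\sum_{i\in\{3,4\}}e_i\otimes f_i\Big)=+\,e_1\otimes f_3,
\]
so both the $W$-cap and the $U$-cap are sent to the same rank-one ``defect'' tensor $e_1\otimes f_3$ but with opposite signs. Applying $E_{13}$ to $T$ through the Leibniz rule, every resulting summand is a diagram in which exactly one arc of some $\sigma$ has been replaced by this defect while the remaining arcs stay (colored) caps. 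Collecting terms, the coefficient of the defect diagram coming from $(\sigma,\alpha)$ with the surviving arcs colored by a fixed $\kappa$ equals $d_{\sigma,\tau_U}-d_{\sigma,\tau_W}$, where $\tau_W,\tau_U$ agree with $\kappa$ off the broken arc $\alpha$ and color $\alpha$ by $W$ and $U$ respectively. Forcing $E_{13}\cdot T=0$ and invoking linear independence of these defect diagrams then gives $d_{\sigma,\tau_W}=d_{\sigma,\tau_U}$ for every arc and every coloring of the complementary arcs; since any two colorings are joined by such single-arc flips, this is exactly color-independence, and therefore $T=\sum_\sigma d_\sigma T_\sigma$ lies in the span of \Cref{eq.A_coinv}.

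The step I expect to demand the most care is the linear independence of the defect diagrams produced by $E_{13}$: one must verify that two of them coincide only when they arise from the same non-crossing $\sigma$, the same broken arc $\alpha$, and the same coloring of the surviving arcs, so that the vanishing of $E_{13}\cdot T$ genuinely splits into the separate relations $d_{\sigma,\tau_W}=d_{\sigma,\tau_U}$. This reduces once more to independence of colored non-crossing tensors built on the remaining $2k-2$ dots—i.e. to \Cref{rem.lin_ind} for the two factors $A_u(2)$—now with two extra prescribed slots carrying the fixed vectors $e_1$ and $f_3$. The bookkeeping is routine but must be handled with some attention, since $e_1$ and $f_3$ themselves also appear inside the $W$- and $U$-caps.
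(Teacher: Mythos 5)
Your reduction of the lemma to color-independence of the coefficients $d_{\sigma,\tau}$, the computation $E_{13}\cdot c_W=-e_1\otimes f_3$, $E_{13}\cdot c_U=+e_1\otimes f_3$, and the single-arc-flip connectivity argument are all correct, and the infinitesimal $\mathfrak{gl}_4$ route is genuinely different from the paper's proof (which fixes a block summand, uses $GL(4)$-equivariance to see that $f$ acts by the same combination of non-crossing permutations on $(U')^{\otimes s}\otimes W^{\otimes(k-s)}$ for every complement $U'$ of $W$, and then degenerates $U'$ onto $W$ in the Grassmannian, so that only \Cref{rem.lin_ind} for honest, unmarked non-crossing pairings is ever invoked). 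However, the step you flag at the end is a genuine gap, not routine bookkeeping. First, what you need is linear independence of the defect diagrams, and distinctness (``two of them coincide only when\dots'') is strictly weaker. Second, the independence does not reduce to \Cref{rem.lin_ind} ``with two extra prescribed slots'': that phrasing treats the defect positions $(p,q)$ as fixed, whereas the real difficulty is interference between defect diagrams whose defects sit on \emph{different} arcs but which lie in the same block-coloring summand. (For instance, in $(V\otimes V^*)^{\otimes 2}$ the diagram with defect on $(1,2)$ and a $U$-cap on $(3,4)$, and the diagram with defect on $(1,4)$ and a $U$-cap on $(2,3)$, both live in the summand $W\otimes U^*\otimes U\otimes U^*$.) After grouping by summand and factoring each defect diagram across the $W$-slots and $U$-slots, what you actually need is: for $\dim W=2$, the tensors ``$e_1$ in one marked $V$-slot, non-crossing $W$-caps on the remaining slots'' are linearly independent \emph{as the marked slot varies}. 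Crucially, the mark may be nested underneath arcs (such configurations do arise, because the defect arc of $\sigma$ can exit the block), and those marked pictures correspond to pairings of an extended word in which the added arc \emph{crosses} other arcs; they are not coinvariants, not images of comodule maps under Frobenius reciprocity, and \Cref{rem.lin_ind} says nothing about them.

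That this independence is delicate, and not a formality, can be seen from the fact that its untyped analogue is \emph{false}: over $SU(2)$, where $V\cong V^*$, the marked diagrams on five strands number $5\cdot 2=10$, yet they all lie in the weight-one highest-weight space of $(\bC^2)^{\otimes 5}$, which is $5$-dimensional; so marked non-crossing diagrams at loop parameter $2$ are linearly dependent as soon as nesting of the mark is allowed without the $V$/$V^*$ typing. In the typed setting you need, the claim does appear to hold (I checked several small cases, where it comes down to a nondegenerate Gram matrix of Temperley--Lieb type with one through-strand, possibly nested, at loop parameter $2$), but proving it requires a genuinely new argument that exploits the typed arcs, and that argument is of roughly the same weight as the lemma itself. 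Until it is supplied, the key implication --- that $E_{13}\cdot T=0$ splits into the separate relations $d_{\sigma,\tau_W}=d_{\sigma,\tau_U}$ --- is unjustified. So either prove the typed marked-diagram independence, or restructure the endgame along the lines of the paper's folding argument, which sidesteps marked diagrams entirely and only ever uses \Cref{rem.lin_ind} on $W^{\otimes k}$.
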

\begin{proof}
Let $S_k$ be the symmetric group on $k$ symbols. We think of coinvariants as linear combinations of permutations $\sigma\in S_k$ of the $k$ tensorands in $V^{\otimes k}$, as explained above. We start out with such a linear combination, say $f=\sum_{\sigma\in S_k} a_\sigma\sigma$ acting on $V^{\otimes k}$ which is also a $D$-coinvariant. When restricted to $W^{\otimes k}$, $f$ agrees with a linear combination of non-crossing pairings appropriate to $W^{(\varepsilon_i)}$. Subtracting the corresponding $A$-coinvariant of $V^{(\varepsilon_i)}$, we can assume that the restriction of $f$ to $W^{\otimes k}$ is zero.

So the new goal is: Show that if $f=\sum_{S_k} a_\sigma\sigma$ is a $D$-coinvariant vanishing on $W^{\otimes k}$, then $f=0$. Recall that we have decomposed $V^{\otimes k}$ into $2^k$ summands, according to a choice of either $W$ or $U$ in each of the $k$ positions in the tensor product. For some $1\le s\le k$, select one of the $\binom{n}{s}$ summands isomorphic to $U^{\otimes s}\otimes W^{\otimes(k-s)}$. We restrict our attention to it for the rest of the proof, and hence there is no ambiguity in the notation. 

Because $f$ is in the span of the $D$-coinvariants \Cref{eq.D_coinv}, its restriction to $U^{\otimes s}\otimes W^{\otimes(k-s)}$ acts as a linear combination $\sum b_\tau\tau$ of non-crossing permutations $\tau$. Moreover, because $f$ is a $U(4)$-intertwiner and hence a $GL(4)$-intertwiner, it acts as the same linear combination of permutations on $(U')^{\otimes s}\otimes W^{\otimes(k-s)}$ for \define{any} choice of complement $U'$ of $W$ in $V$. By continuity, we can ``fold'' $U'$ onto $W$ and conclude that $f$ acts as $\sum b_\tau\tau$ on $W^{\otimes k}$:
\[
	\begin{tikzpicture}[baseline=(current  bounding  box.center),
			wh/.style={circle,draw=black,thick,inner sep=2mm},
			bl/.style={circle,draw=black,fill=black,thick,inner sep=2mm},
			whs/.style={circle,draw=black,thick},
			bls/.style={circle,draw=black,fill=black,thick}]
		\node (1) at (0,0) [wh] {};
		\node (2) at (1,0) [bls] {};
		\node (3) at (2,0) [whs] {};
		\node (4) at (3,0) [bl] {};
		\node (5) at (1.5,.5) {$\tau$};
		\draw (1) to [bend right] (4);
		\draw (2) to [bend left] (3);
		\draw [->,decorate,decoration={snake,amplitude=.4mm,segment length=2mm,post length=1mm}]
		(4,0) -- (8,0)
		node [above,align=center, midway]
		{continuity as $U'\to W$\\ in the Grassmannian $\mathrm{Gr}(2,V)$\\ of $2$-planes in $V$};	
		\node (1') at (9,0) [wh] {};
		\node (2') at (10,0) [bl] {};
		\node (3') at (11,0) [wh] {};
		\node (4') at (12,0) [bl] {};
		\node (5') at (10.5,.5) {$\tau$};
		\draw (1') to [bend right] (4');
		\draw (2') to [bend right] (3');
	\end{tikzpicture}
\]
But we are assuming that $f|_{W^{\otimes k}}$ is zero, and hence $b_\tau$ are all zero by the linear independence of non-crossing permutations on $W^{\otimes k}$ (\Cref{rem.lin_ind}).  
\end{proof}

This bootstraps RFD-ness a little bit, from $n=2$ up to $n=4$. The rest is taken care of by its companion result:

\begin{lemma}\label{le.4->}
Let $n\ge 5$, so that $D=C*A_u(n-1)$. For any choice of symbols $(\varepsilon_1,\ldots,\varepsilon_{2k})$ consisting of $k$ blanks and $k$ $*$'s, a map $V^{(\varepsilon_i)}\to\bC$ that is both a $U(n)$-coinvariant and a $D$-coinvariant is in the span of the non-crossing pairings \Cref{eq.A_coinv}.
\end{lemma}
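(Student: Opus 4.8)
The plan is to reproduce the proof of \Cref{le.2->4} almost verbatim, the one genuine difference being that the complementary summand $U$ is now one-dimensional rather than two-dimensional. As there, I would identify $V^{(\varepsilon_i)}$ with $V^{\otimes k}$ after pulling all the $V^*$'s to the left, so that a $U(n)$-coinvariant becomes a linear combination $f=\sum_{\sigma\in S_k}a_\sigma\sigma$ of permutations of the $k$ tensorands. Restricting such an $f$ to the all-$W$ summand $W^{\otimes k}$, where $W$ is the $(n-1)$-dimensional vector comodule of the $A_u(n-1)$-factor of $D$, the $D$-coinvariance forces $f|_{W^{\otimes k}}$ to be a combination of non-crossing pairings appropriate to $W^{(\varepsilon_i)}$, i.e.\ of $A_u(n-1)$-coinvariants. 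Since these are indexed by the same non-crossing diagrams as the $A$-coinvariants \Cref{eq.A_coinv} of $V^{(\varepsilon_i)}$, I would subtract the corresponding element of \Cref{eq.A_coinv} and thereby reduce to the case $f|_{W^{\otimes k}}=0$; it then suffices to prove $f=0$.

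Next I would fix $1\le s\le k$ and one of the summands isomorphic to $U^{\otimes s}\otimes W^{\otimes(k-s)}$. By $D$-coinvariance, $f$ restricts on it to a combination $\sum_\tau b_\tau\tau$ of non-crossing permutations $\tau$ preserving the partition of the $k$ slots into $U$-slots and $W$-slots. Because $f$ is $U(n)$-invariant, hence $GL(n)$-invariant, and permutations commute with the diagonal $GL(n)$-action, $f$ acts by the very same combination $\sum_\tau b_\tau\tau$ on $(U')^{\otimes s}\otimes W^{\otimes(k-s)}$ for every line $U'$ complementary to $W$. Letting $U'$ degenerate to a line $\ell\subseteq W$ inside the projective space $\bP(V)=\mathrm{Gr}(1,V)$ and invoking continuity exactly as in \Cref{le.2->4}, I conclude that $f$ acts as $\sum_\tau b_\tau\tau$ on $\ell^{\otimes s}\otimes W^{\otimes(k-s)}\subseteq W^{\otimes k}$; since $f|_{W^{\otimes k}}=0$, this combination vanishes on $\ell^{\otimes s}\otimes W^{\otimes(k-s)}$ for every line $\ell\subseteq W$.

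The step I expect to be the main obstacle is the concluding linear-independence argument, precisely because $U$ (hence $\ell$) is one-dimensional, so the folding lands only in $\ell^{\otimes s}\otimes W^{\otimes(k-s)}$ and permutations of the $U$-slots collapse. To handle this I would write a partition-preserving permutation as $\tau=\tau_U\times\tau_W$ with $\tau_U\in S_s$ and $\tau_W\in S_{k-s}$; since $\ell$ is one-dimensional, $\tau_U$ acts as the identity on $\ell^{\otimes s}$, so $\tau$ acts on $\ell^{\otimes s}\otimes W^{\otimes(k-s)}$ as $\id\otimes\tau_W$ and depends only on $\tau_W$. Grouping the non-crossing $\tau$'s by their $W$-component turns the vanishing relation into $\sum_{\tau_W}c_{\tau_W}(\id\otimes\tau_W)=0$ on $\ell^{\otimes s}\otimes W^{\otimes(k-s)}$, with $c_{\tau_W}=\sum_{\tau\mapsto\tau_W}b_\tau$. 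Deleting the $U$-slots from a non-crossing diagram leaves a non-crossing diagram, so each $\tau_W$ is a non-crossing permutation of the $k-s$ $W$-slots; as $\dim W=n-1\ge 4\ge 2$, these are linearly independent by \Cref{rem.lin_ind}, whence so are the operators $\id\otimes\tau_W$. Therefore every $c_{\tau_W}=0$, and since the restriction of $f$ to $U^{\otimes s}\otimes W^{\otimes(k-s)}$ likewise depends only on the $\tau_W$'s, it equals $\sum_{\tau_W}c_{\tau_W}(\id\otimes\tau_W)=0$. As this holds for every summand, $f=0$, which together with \Cref{cor.RFD_quot_CQG} and the RFD-ness of $A_u(n-1)$ completes the induction.
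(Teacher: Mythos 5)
There is a genuine gap, and it sits precisely at the point where this lemma is harder than \Cref{le.2->4}: your claim that the $D$-coinvariants restricted to the summand $U^{\otimes s}\otimes W^{\otimes(k-s)}$ are spanned by non-crossing permutations \emph{preserving the partition} of the $k$ slots into $U$-slots and $W$-slots, i.e.\ operators of block form $\tau=\tau_U\times\tau_W$. This is false. A $D$-pairing must match $U$ with $U^*$ and $W$ with $W^*$, but nothing forces the output positions receiving the $U$-vectors to coincide with the input positions of the $U$-slots: a non-crossing $D$-pairing may send the $s$ tensorands from $U^{\otimes s}$ to \emph{any} $s$-element set $\alpha$ of output spots compatible with non-crossingness. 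Concretely, take $V^{(\varepsilon_i)}=V\otimes V^*\otimes V\otimes V^*$ (so $k=2$) and the input summand with the first $V$ equal to $U$ and the second equal to $W$ ($s=1$). The two pairings $\{1\text{--}2,\,3\text{--}4\}$ and $\{1\text{--}4,\,3\text{--}2\}$ are \emph{both} non-crossing and both occur as $D$-coinvariants on this summand (with output placements slot $2=U^*$, slot $4=W^*$, respectively slot $4=U^*$, slot $2=W^*$); one sends the $U$-tensorand to output slot $2$, the other to output slot $4$. Whatever fixed identification of coinvariants with endomorphisms of $V^{\otimes k}$ you choose, at most one of these two operators can have your block form, so the actual space of $D$-coinvariants on the summand is strictly larger than the span of partition-preserving permutations. (The paper's own left-hand picture in \Cref{eq.D_coinv} is exactly such a non-block coinvariant: the $U$ in the fifth slot is paired with the $U^*$ in the second slot.)

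This error is not cosmetic, because the whole difficulty of the case $\dim U=1$ is the interference between pairings with different output placements $\alpha$: after folding $U'=\ell\subseteq W$, all of their outputs lie in $W^{\otimes k}$ and can cancel against one another, so your grouping by $\tau_W$ and the appeal to \Cref{rem.lin_ind} simply do not apply to the sums that actually occur. The paper's proof is built around exactly this issue: it sorts the $\tau$'s into classes $C_\alpha$ according to where they send the $U'$-tensorands, chooses a complement $W'$ of $U'$ inside $W$, and restricts inputs to $(U')^{\otimes s}\otimes(W')^{\otimes(k-s)}$ so that the outputs of distinct classes land in linearly independent summands of $(U'\oplus W')^{\otimes k}$; only then, within each class, does it invoke linear independence of the induced non-crossing permutations on $(W')^{\otimes(k-s)}$ --- and this is where $\dim W'=n-2\ge 2$ enters. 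A tell-tale sign of the gap: your argument uses only $\dim W\ge 2$ and never the hypothesis $n\ge 5$ (nor even $n\ge 4$), so if it were correct it would run verbatim for $n=3$ with $D=C*A_u(2)$ and, combined with \Cref{le.n=2}, \Cref{pr.RFD_free_prod} and \Cref{cor.RFD_quot_CQG}, would prove that $A_u(3)$ is RFD --- precisely the case the paper explicitly states its method cannot reach (see the remark following \Cref{le.4->}).
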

\begin{proof}
The argument is very similar to the proof of \Cref{le.2->4}; it is only the last step that requires more care. As before, we fix an endomorphism $f$ of $V^{\otimes k}$ which intertwines both the action of $U(n)$ and the coaction of $D$, and we assume that its restriction to $W^{\otimes k}$ is zero. We then seek to show that $f$ also vanishes on any summand $U^{\otimes s}\otimes W^{\otimes(k-s)}$. 

Once again this applies to any line $U'$ complementary to $W$, and by a limiting argument inside the projective space $\bP(V)$ we can assume, as in the previous proof, that $U'$ is contained in $W$. The restriction of $f$ to $U^{\otimes s}\otimes W^{\otimes(k-s)}$ acted as a linear combination $\sum b_\tau\tau$ of non-crossing permutations $\tau$, and so its restriction to $(U')^{\otimes s}\otimes W^{\otimes(k-s)}$ is this same linear combination, which in addition we know vanishes. 

The problem this time around is that $U'$ is one-dimensional. This means that $\sum b_\tau\tau$ only vanishes when restricted to those tensors in $W^{\otimes k}$ which are symmetric in the $s$ tensorands $(U')^{\otimes s}$, and hence now, unlike in the proof of \Cref{le.2->4}, we cannot conclude that all $b_\tau$ vanish. However, we do not need to. 

Collect the $\tau$'s into classes $C_\alpha$ based on which $s$ spots among $1,2,\ldots,k$ they send the $s$ tensorands from $(U')^{\otimes s}$. The index $\alpha$, in other words, ranges over the $s$-element subsets of $\{1,\ldots,k\}$. Choose a complement $W'$ of $U'$ in $W$. Decompose tensors in $W^{\otimes k}$ according to the splitting $W=U'\oplus W'$, we conclude that for each class $C_\alpha$ the linear combination $\Sigma_\alpha=\sum_{\tau\in C_\alpha}b_\tau\tau$ vanishes on $(U')^{\otimes s}\otimes(W')^{\otimes(k-s)}$. Each $\tau\in C_\alpha$ induces a non-crossing intertwiner $\ol{\tau}$ of $(W')^{\otimes(k-s)}$ by simply looking at what the permutation does to the $k-s$ tensorands from $W'$. Since $\dim(W')\ge 2$, the linear independence of non-crossing permutations (\Cref{rem.lin_ind}) implies that for any non-crossing permutation $\ol{\sigma}$ of $(W')^{\otimes(k-s)}$, the sum of all $b_\tau$ for $\tau\in C_\alpha$ such that $\ol{\tau}=\ol{\sigma}$ is zero. But then the restriction of $\Sigma_\alpha$ to $(U'')^{\otimes s}\otimes W^{\otimes(k-s)}$ (or indeed $(U'')^{\otimes s}\otimes V^{\otimes(k-s)}$) vanishes for \define{any} one-dimensional space $U''$, in particular for $U''=U$. This gives the desired conclusion that $f$ restricted to $U^{\otimes s}\otimes W^{\otimes(k-s)}$ is zero. 
\end{proof}

\begin{remark}
It is at the point where we chose a complement $W'$ of $U'$ in $W$ that $n\ge 5$ played a role. That condition implies that non-crossing permutations on the $(n-2)$-dimensional space $W'$ are linearly independent. We only need $n=4$ for this to work, and hence the proof would get the RFD property for $n=4$ if we had it for $n=3$; as noted before, we do not.  
\end{remark}

\begin{proof}[Proof of \Cref{th.main}]
This is now simply a matter of assembling everything together: \Cref{le.n=2} gets the induction going, then \Cref{le.2->4} pushes us up to $n=4$, and finally, \Cref{le.4->} gets the RFD property for $A_u(n)$ for every $n\ge 5$. Finally, from \Cref{pr.A<->B} we then deduce that $B_u(n)$ is RFD for the same values of $n$.  
\end{proof}

\begin{remark}
Note that \Cref{le.2->4,le.4->} can be construed as an alternate proof for the fusion rules \Cref{eq.free_bis} of $A_u(n)$ for $n\ge 4$. Indeed, the conjunction of the two lemmas shows that the coinvariants of $V^{(\varepsilon_i)}$ for these CQG algebras are spans of non-crossing pairings. Conversely, the non-crossing pairings are coinvariants of $V^{(\varepsilon_i)}$ for any comodule $V$ over any CQG algebra. 
\end{remark}

\subsection{On maximal almost periodic discrete quantum groups}

We now make the connection between residual finite-dimensionality as treated here and the notion of maximal almost periodicity from \cite{MR2210362}. 

One starts out by regarding the CQG algebra $B$ underlying a compact quantum group as the group algebra of a discrete quantum group. The object dual to a discrete quantum group will then be a kind of dual to $B$. This is typically phrased in the language of $C^*$-algebras: One first takes the direct sum $B^\bullet$ of the matrix algebras dual to the matrix subcoalgebras of $B$. This is a non-unital $*$-algebra, and can be $*$-represented on Hilbert spaces. Taking for every element $x\in B^\bullet$ the supremum of the norms achieved by $x$ in all of these representations endows $B^\bullet$ with a norm, and the completion with respect to this norm is a non-unital $C^*$-algebra $A$, which is to be thought of as the algebra of functions vanishing at infinity on the fictitious underlying discrete space of this quantum group. 

The quantum function algebra $A$ has something like a comultiplication $\Delta$ reflecting the fact that it is trying to be functions on a group, but it is something somewhat more sophisticated than in the purely algebraic situations we are dealing with in this paper. The map $\Delta_A$ lands inside $M(A\otimes A)$, where the tensor product stands for the completion of the algebraic tensor product with respect to the smallest possible $C^*$-norm on it, and $M(-)$ is the so-called \define{multiplier algebra} of a non-unital $C^*$-algebra. 

For a general $C^*$-algebra $D$, $M(D)$ is the largest unital $C^*$-algebra containing $D$ as an essential ideal (`essential' meaning that every non-zero closed ideal intersects $D$ non-trivially).  There is a natural topology on $M(A)$ with respect to which $A$ is dense, called the \define{strict} topology; we refer to Chapter 2 of \cite{MR1222415} for generalities on multiplier algebras.  

Within this framework (and in fact more generally), So\l tan introduces in \cite[2.14]{MR2210362} the notion of \define{quantum Bohr compactification} for the discrete quantum group in question. Classically, the Bohr compactification of a discrete group $\Gamma$ is a compact group mapped into from $\Gamma$ universally; the continuous functions on the Bohr compactification restrict to the so-called \define{almost periodic} functions on the initial, non-compact group. Dually, in the quantum case it consists of an appropriately universal comultiplication-preserving $C^*$-algebra inclusion of a $C^*$-completed CQG algebra $\mathbb{AP}(A)$ into the multiplier algebra $M(A)$. 

To get a better handle on this object in the context of this paper, let us note here that the CQG algebra underlying the $C^*$-algebraic compact quantum group $\mathbb{AP}(A)$ is precisely the Hopf dual $B^\circ$ from \Cref{def.Hopf_dual}. We need this below, in the proof of \Cref{pr.MAP}. 

Now, ordinary discrete grups are said to be \define{maximal almost periodic} if they possess enough almost periodic functions, i.e. if they embed in their Bohr compactifications. The dual version of this property, according to \cite[4.5]{MR2210362}, ought to be as follows:

\begin{definition}
The discrete quantum group with underlying function algebra $A$ is \define{maximal almost periodic} if the subalgebra $\mathbb{AP}(A)\le M(A)$ is dense with respect to the aforementioned strict topology. 
\end{definition}

Part (1) of \cite[Proposition 4.10]{MR2210362} shows that the discrete quantum group is indeed maximal almost periodic whenever the universal $C^*$-completion of the CQG algebra $B$ is RFD in the $C^*$ sense, i.e. the $C^*$ envelope has a separating family of (continuous) $*$-representations on finite-dimensional Hilbert spaces. 

An examination of the proof of that result shows that it goes through so long as \define{some} $C^*$ completion of $B$ is RFD. Equivalently, this means exactly that $B$ itself is RFD as a $*$-algebra. In conclusion, the following slightly more general statement holds:

\begin{proposition}\label{pr.MAP}
If the CQG algebra $B$ is RFD in the sense of \Cref{def.RFD}, then the discrete dual of the compact quantum group corresponding to $B$ is maximal almost periodic. \qedhere
\end{proposition}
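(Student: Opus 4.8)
The plan is to reduce the statement to part (1) of \cite[Proposition 4.10]{MR2210362}, which already establishes maximal almost periodicity under the a priori stronger hypothesis that the universal $C^*$-completion $C^*_u(B)$ be residually finite-dimensional in the $C^*$ sense. First I would recall, as in the discussion preceding the statement, that the CQG algebra underlying $\mathbb{AP}(A)$ is exactly the Hopf dual $B^\circ$ of \Cref{def.Hopf_dual}, so that the elements used to approximate multipliers in $M(A)$ are (limits of) the matrix coefficients spanning $B^\circ$, i.e. matrix coefficients of finite-dimensional $*$-representations of $B$. The heart of the matter is then the observation that So\l tan's argument for strict density never uses the universal completion \emph{as such}: it only uses that the finite-dimensional $*$-representations of $B$ form a separating family, equivalently that $B^\circ$ separates the points of $B$, equivalently (by \Cref{pr.RFD_quot_alg}) that the canonical map $B\to B^{\circ *}$ is injective. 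This separating-family condition is precisely \Cref{def.RFD} for $B$.

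To match So\l tan's $C^*$-algebraic framework cleanly, I would produce from the hypothesis an honest RFD $C^*$-completion of $B$. Define on $B$ the seminorm $\|b\|_{\mathrm{rfd}}=\sup_\pi\|\pi(b)\|$, the supremum ranging over all $*$-homomorphisms $\pi:B\to M_n(\bC)$. This is finite because $\|\cdot\|_{\mathrm{rfd}}$ is dominated by the universal $C^*$-norm, which exists for every CQG algebra; and it is a genuine norm precisely because $B$ is RFD. Completing yields a $C^*$-algebra $C^*_{\mathrm{rfd}}(B)$ into which $B$ embeds densely. A short estimate (approximate an element of the completion by one of $B$ and use that $\|\cdot\|_{\mathrm{rfd}}$ is the supremum of the representation norms) shows that the extended representations still separate the points of $C^*_{\mathrm{rfd}}(B)$, so this completion is RFD in the $C^*$ sense. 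Since the comultiplication $\Delta:B\to B\otimes B$ is a $*$-homomorphism, one checks $\|\Delta(b)\|_{\mathrm{rfd}}\le\|b\|_{\mathrm{rfd}}$, so $\Delta$ extends and $C^*_{\mathrm{rfd}}(B)$ is a $C^*$-algebraic compact quantum group, legitimately feeding So\l tan's machinery.

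Finally I would assemble the pieces: $C^*_{\mathrm{rfd}}(B)$ is a $C^*$-completion of $B$ that is RFD in the $C^*$ sense, so the (re-examined) proof of \cite[Proposition 4.10]{MR2210362} applies and yields strict density of $\mathbb{AP}(A)$ in $M(A)$, i.e. maximal almost periodicity of the discrete dual.

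The step I expect to be the genuine obstacle is the re-examination of So\l tan's proof: one must verify that the construction there of strictly approximating multipliers uses only a separating family of finite-dimensional $*$-representations (equivalently, any RFD completion), and nowhere the maximality of the universal completion. Everything else—the existence and RFD-ness of $C^*_{\mathrm{rfd}}(B)$ and its compatibility with the coproduct—is routine, and the equivalence between ``$B$ is RFD as a $*$-algebra'' and ``$B$ has an RFD $C^*$-completion'' is immediate in both directions.
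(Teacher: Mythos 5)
Your proposal is correct and follows essentially the same route as the paper: the paper's (implicit) proof is precisely the observation that So\l tan's \cite[Proposition 4.10 (1)]{MR2210362} only needs \emph{some} RFD $C^*$-completion of $B$ rather than the universal one, together with the equivalence between that condition and $B$ being RFD as a $*$-algebra in the sense of \Cref{def.RFD}. Your explicit construction of $C^*_{\mathrm{rfd}}(B)$ via the supremum norm over finite-dimensional $*$-representations just spells out the ``equivalently'' that the paper leaves as a remark, and is a welcome (routine, correct) addition.
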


As a direct consequence of \Cref{th.main} we then have

\begin{theorem}\label{th.MAP}
For $n\ge 2$ but different from $3$, the discrete quantum groups dual to $A_u(n)$ and $B_u(n)$ are maximal almost periodic.  \qedhere
\end{theorem}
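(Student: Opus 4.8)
The plan is to recognize that this theorem is a direct corollary, so the proof amounts to feeding the output of \Cref{th.main} into the hypothesis of \Cref{pr.MAP}. Concretely, I would first invoke \Cref{th.main} to record that for every $n\ge 2$ with $n\ne 3$, both CQG algebras $A_u(n)$ and $B_u(n)$ are RFD in the sense of \Cref{def.RFD}. This is precisely the property required as input to the maximal-almost-periodicity criterion.

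Next I would apply \Cref{pr.MAP} twice, once with the CQG algebra $B$ taken to be $A_u(n)$ and once with $B=B_u(n)$. In each case the RFD-ness of the CQG algebra guarantees, via \Cref{pr.MAP}, that the discrete dual of the associated compact quantum group is maximal almost periodic. Since \Cref{th.main} supplies RFD-ness for exactly the range $n\ge 2$, $n\ne 3$, the conclusion holds for that same range, which is what is claimed.

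I do not expect any genuine obstacle here, since both constituent results are already established earlier in the paper: \Cref{th.main} is the main theorem proved via the inductive comodule-category argument (\Cref{le.n=2}, \Cref{le.2->4}, \Cref{le.4->}), and \Cref{pr.MAP} is the bridge to So\l tan's notion of maximal almost periodicity, itself a mild strengthening of \cite[Proposition 4.10 (1)]{MR2210362} obtained by observing that one only needs \emph{some} RFD $C^*$-completion of the CQG algebra rather than the full one. The only point worth stating explicitly is that the algebraic RFD property of \Cref{def.RFD} is exactly the hypothesis \Cref{pr.MAP} consumes, so no further completion or translation step is needed; the two results compose cleanly and the theorem follows at once.
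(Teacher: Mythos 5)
Your proposal is correct and matches the paper's own treatment exactly: the paper states \Cref{th.MAP} as a direct consequence of \Cref{th.main} combined with \Cref{pr.MAP}, with no further argument needed. The composition of the two results is indeed the entire proof.
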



\section*{Acknowledgements}

This work constitutes one chapter of the author's PhD dissertation at the University of California at Berkeley. I would like to thank my advisers Vera Serganova and Nicolai Reshetikhin for all of their support, as well as Piotr So\l tan for helpful conversations on the contents of \cite{MR2210362}.

The work was partially supported by the Danish National research Foundation through the QGM Center at Aarhus University and by the Chern-Simons Chair in Mathematical Physics at UC Berkeley.

\bibliographystyle{plain}

\end{document}